\numberwithin{equation}{section} \hyphenation{semi-stable}
\font\tengothic=eufm10 scaled\magstep 1 \font\sevengothic=eufm7
\newtheorem{Theorem}{Theorem}
\newtheorem{Lemma}[Theorem]{Lemma}
\newtheorem{Proposition}[Theorem]{Proposition}
\newtheorem{Conjecture}{Conjecture}
\theoremstyle{definition}
\newtheorem{Definition}[Theorem]{Definition}
\newtheorem{Remark}[Theorem]{Remark}
\newtheorem{Problem}[Theorem]{Problem}
\newtheorem{Example}[Theorem]{Example}
\newcommand{\KK}{{\mathbb K}}
\newcommand{\ZZ}{{\mathbb Z}}
\newcommand{\PP}{{\mathbb P}}
\newcommand{\QQ}{{\mathcal Q}}
\newcommand{\FF}{{\mathcal F}}
\newcommand{\EE}{{\mathcal E}}
\newcommand{\OO}{{\mathcal O}}
\newcommand{\rk}{\operatorname{rk}}
\begin{document}
\title[Uniform Steiner bundles]
{Uniform Steiner bundles}

\author[S. Marchesi, R.M.\ Mir\'o-Roig]{S. Marchesi, R.M.\
Mir\'o-Roig}

\address{Facultat de Matem\`atiques i Inform\`{a}tica,
Departament de Matem\`{a}tiques i Inform\`{a}tica, Gran Via de les Corts Catalanes
585, 08007 Barcelona, SPAIN } 
 \email{marchesi@ub.edu}
\email{miro@ub.edu}

\date{\today}

\subjclass{Primary 14F05, 14J60}

\begin{abstract} In this work we study $k$-type uniform Steiner bundles, being $k$ the lowest degree of the splitting. We prove sharp  upper and lower bounds for the rank in the case $k=1$ and moreover we give families of examples for every allowed possible rank and explain which relation exists between the families. After dealing with the case $k$ in general, we conjecture that every $k$-type uniform Steiner bundle is obtained through the proposed construction technique.  \end{abstract}

\maketitle

\section{Introduction}

Vector bundles on projective spaces are fundamental objects to study in order to understand the geometry and
topology of the varieties. In the study of vector bundles $\EE$ on $\PP^n$ an important technique is the restriction to a (general) hyperplane $H$ or to a (general) line $L$, since a lot of geometric information can be carried out from $\EE_{H}$ (resp. $\EE_{L}$) to $\EE$.

\vskip 2mm
Thanks to a well known result $\EE_L$ splits as a sum of line bundles $\EE_L\cong \oplus _{i=1}^r{\mathcal O}_{L}(a_i^L)$ and we call $(a_1^L\leq \dots \leq a_r^L)$ the splitting type of $\EE$ on $L$. Moreover, there exists a non-empty open subset $U\subset G(1,n)$ and a set of integers $(a_1\leq \dots \leq a_r)$ such that $\EE_L\cong  \oplus _{i=1}^r{\mathcal O}_{L}(a_i)$ for all $L\in U$. For every line $L \in U$, the given splitting is called the \emph{generic splitting type} of $\EE$ and the set $J(\EE)=G(1,n)\setminus U$ is called the set of \emph{jumping lines}. A vector bundle $\EE$ on $\PP^n$ is said to be {\rm uniform} if $J(\EE)=\emptyset$.\\ Uniform vector bundles have been studied for more than forty years and we will recall some among the most important results about them in the forthcoming section. So far only uniform bundles on $\PP^n$ of rank $r\le n+1$ have been classified and  a complete classification of uniform bundles of higher rank seems out of reach (see Section 2 for more details). In this paper we restrict our attention to Steiner bundles on $\PP^n$ and we address the problem of classifying uniform Steiner bundles with fixed splitting type. Moreover, we describe a technique that allows us to find families of uniform Steiner bundles whose defining matrix is explicit.

\vskip 2mm
The paper is organized as follows. In Section 2, we fix notation and we recall basic facts on homogeneous bundles, uniform bundles and Steiner bundles needed throughout the paper. Section 3 contains the main results of our work: we determine the possible ranks and first Chern classes of a 1-type  uniform bundle on $\PP^n$ (see Theorems 10 and 13) and how to connect two 1-type uniform Steiner bundles with fixed first Chern class. More precisely, given two 1-type uniform  bundles $\EE_1$ and $\EE_2$ on $\PP^n$ with $c_1(\EE_1)=c_1(\EE_2)$ and with associated matrices $A_1$ and $A_2$, respectively, we can get $A_2$ from $A_1$ by first adding a suitable number of linearly independent columns and then eliminating the right number of linear combination of columns (see Remark 15). In Section 4, we deal with $k$-type uniform Steiner bundles on $\PP^2$ and we get upper and lower bounds for their rank in terms of their first Chern class. We also explain a tricky construction of $k$-type uniform Steiner  bundles (see Example 16). In the last section we conjecture that  any $k$-type  uniform Steiner bundle can be obtained through our construction.

\vskip 2mm
\noindent \textbf{Acknowledgments:} The first author is partially supported by Fapesp Grant n. 2017/03487-9 and the program \emph{Research in Pairs} of the CRM in collaboration with the IMUB of the Universitat de Barcelona. The second  author is partially supported by  MTM2016--78623-P. The authors would like to thank  the Universidade Estadual de Campinas (IMECC-UNICAMP), the Universitat de Barcelona, the IMUB and the CRM for their kind hospitality.

\vskip 2mm
\noindent {\bf Notation.} In this paper $\mathbb{K}$ will be an algebraically closed field of characteristic 0, $V$ a $\mathbb{K}$-vector space of dimension $n+1$ and $\PP^n=\PP (V)$.


\section{Preliminaries}\label{sec-prel}

Let us start recalling  the definition of homogeneous bundles, uniform bundles as well as their relation and open questions related to them. To this end, we denote by $G(1,n)$ the Grassmannian of lines in $\PP^n$ and by $\ell $ the point in $G(1,n)$ which parameterizes the line $L\subset \PP^n$. According to Grothendieck's Theorem (see \cite{OSS}; Theorem 2.1) for any rank $r$ vector bundle $\EE$ on $\PP^n$ and for any $\ell \in G(1,n)$ there is an $r$-tuple
$$a_{\EE}(\ell)=(a_1(\ell),a_2(\ell),\dots ,a_r(\ell)), \quad a_1(\ell)\leq a_2(\ell)\leq \dots \leq a_r(\ell) $$
such that $\EE_{L}\cong \oplus _{i=1}^r{\mathcal O}_{L}(a_1(\ell))$. The $r$-tuple $a_{\EE(\ell )}$ is called {\em the splitting type of } $\EE$ {\em on} $\ell$. We have a map $$a_{\EE}:G(1,n)\longrightarrow \ZZ^r, \quad \ell \mapsto a_{\EE}(\ell)$$

\begin{Definition}\rm  \label{uniform} A vector bundle $\EE$ of rank $r$ on $\PP^n$ is {\em uniform} if $a_\EE$ is constant.
\end{Definition}

\begin{Example} \label{ExUni} \rm (1)  The sum of line bundles $\oplus {\mathcal O}_{\PP^n}(a_i)$ is uniform.

(2) The cotangent bundle $\Omega ^1_{\PP^n}$ is uniform since for any $\ell \in G(1,n)$ we have $a_{\Omega ^1_{\PP^n}}(\ell)=(-2,-1,\dots ,-1)$.
\end{Example}

\begin{Definition} \label{homogeneous} A vector bundle $\EE$ of rank $r$ on $\PP^n$ is {\em homogeneous} if for any projective transformation $t\in PGL(n+1,\KK)$ we have $t^*\EE\cong \EE$.
\end{Definition}

\begin{Example} \rm (1)  The sum of line bundles $\oplus \OO_{\PP^n}(a_i)$ is homogeneous.

(2) The tangent bundle $T_{\PP^n}$ is homogeneous since the differential of a projective transformation $t$ defines an isomorphism $T_{\PP^n}\cong t^*T_{\PP^n}$.
\end{Example}

Since, given two lines $L_1,L_2\subset \PP^n$, there is a projective transformation $t\in PGL(n+1,\KK)$ transforming $L_1$ onto $L_2$, we deduce that homogeneous vector bundles are always uniform. The notion of uniform vector bundle appeared first in the paper \cite{Sc} by Schwarzenberger, where he raised the question whether a uniform vector bundle $\EE$ on $\PP^n$ is homogeneous.\\ 
Let us summarize what we know.

\begin{Proposition} Let $\EE$ be a uniform rank $r$ vector bundle on $\PP^n$. If $r<n$ then $\EE$ splits into a sum of line bundles and, hence, it is homogeneous.
\end{Proposition}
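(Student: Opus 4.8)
The plan is to argue by induction on $n$, reducing the splitting problem on $\PP^n$ to the same statement on a hyperplane $\PP^{n-1}$, with the base case $n=2$ (or even $n=1$, trivially) handled directly. The key geometric input is the classical incidence/flag-variety picture: let $F = \{(x,\ell) : x \in L\} \subset \PP^n \times G(1,n)$ with projections $p : F \to \PP^n$ and $q : F \to G(1,n)$; since $\EE$ is uniform with constant splitting type $(a_1 \leq \dots \leq a_r)$, the line bundles $\OO_L(a_i)$ glue, along the fibers of $q$, into a filtration of $q^* \EE$ (equivalently, $q_* (p^*\EE \otimes \OO(-a_1))$ is locally free and the construction can be iterated). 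Concretely, for a fixed point $x \in \PP^n$, the lines through $x$ form a $\PP^{n-1}$, and restricting $\EE$ to this family produces a bundle on the projectivized tangent/normal data that records the jumping of the sub-line-bundle $\OO_L(a_1) \hookrightarrow \EE_L$ as $L$ varies. The upshot I would extract, following the standard argument (as in the proof of the Van de Ven / Sato theorem in \cite{OSS}, Ch. II), is a sub-line-bundle $\OO_{\PP^n}(a_1) \hookrightarrow \EE$ whose restriction to every line is the inclusion of the bottom summand.

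First I would set up this sub-line-bundle $L_1 = \OO_{\PP^n}(a_1) \hookrightarrow \EE$. The point where $r < n$ is used is precisely here: the obstruction to producing such a global sub-line-bundle lives in a cohomology group (an $\Ext^1$ or an $H^1$ of a twist of $\Omega^1_{\PP^n}$-type sheaves built from the relative splitting data) which vanishes once the rank is small relative to the dimension, because the "variation" of the flag of sub-line-bundles over $G(1,n)$ is too rigid — a family of $(r-1)$-planes (or flags) in a trivial bundle parametrized by $G(1,n)$ with the uniformity constraint must be constant when $r-1 < n$. Granting $L_1 \subset \EE$, I would then show the quotient $\EE' = \EE / L_1$ is again a uniform bundle on $\PP^n$, now of rank $r - 1$, with splitting type $(a_2 \leq \dots \leq a_r)$: on each line $L$ the exact sequence $0 \to \OO_L(a_1) \to \EE_L \to \EE'_L \to 0$ splits, giving $\EE'_L \cong \bigoplus_{i=2}^r \OO_L(a_i)$, so $\EE'$ is a bundle (quotient of a bundle by a sub-bundle, flat on lines) and uniform. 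Since $r - 1 < n$ still, by descending induction on the rank I peel off all the summands and obtain a filtration of $\EE$ with quotients $\OO_{\PP^n}(a_i)$.

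Finally I would argue the filtration splits. Each extension class lies in $\Ext^1(\OO_{\PP^n}(a_j), \OO_{\PP^n}(a_i)) = H^1(\PP^n, \OO_{\PP^n}(a_i - a_j))$, and $H^1(\PP^n, \OO_{\PP^n}(d)) = 0$ for all $d$ when $n \geq 2$ (and there is nothing to do when $n = 1$); hence all the extensions are trivial and $\EE \cong \bigoplus_{i=1}^r \OO_{\PP^n}(a_i)$. Once $\EE$ is a direct sum of line bundles it is visibly homogeneous, since $t^* \OO_{\PP^n}(a_i) \cong \OO_{\PP^n}(a_i)$ for every $t \in PGL(n+1,\KK)$, proving the last assertion. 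The main obstacle is the first step — constructing the global destabilizing sub-line-bundle $\OO_{\PP^n}(a_1) \hookrightarrow \EE$ and pinning down exactly where $r < n$ forces the relevant obstruction space to vanish; the rest (inductive descent and splitting of extensions via $H^1(\PP^n, \OO(d)) = 0$) is formal.
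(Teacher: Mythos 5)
The paper offers no argument of its own here: its proof is the single line ``See \cite{OSS}; Theorem 3.2.3'', so your plan has to be measured against the standard proof given there, which is indeed the one you are reaching for (incidence variety $F\subset\PP^n\times G(1,n)$, descent of the relative splitting data, induction on the rank, and $H^1(\PP^n,\OO_{\PP^n}(d))=0$ to split the resulting extensions). Your last two steps (passing to the uniform quotient of smaller rank and splitting the filtration) are fine as stated. The genuine gap is exactly the step you yourself flag as the main obstacle, and the mechanism you propose for it is not the one that works.

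Two concrete problems. First, the object that descends is not a sub-line-bundle $\OO_{\PP^n}(a_1)\hookrightarrow\EE$: if the extremal degree occurs with multiplicity $k>1$, there is no distinguished copy of $\OO_L(a_1)$ inside $\EE_L$, so no line subbundle can be glued over $G(1,n)$; what glues is the full extremal isotypic piece, a rank-$k$ subbundle of $p^*\EE$ on $F$ determined by a gap in the splitting type (the case of constant splitting type, where there is no gap, must be treated separately using the fact that a bundle trivial on every line is trivial, \cite[Theorem 3.2.1]{OSS}, and the same fact is what identifies the descended piece as $\OO_{\PP^n}(a_1)^k$). Second, the descent from $F$ to $\PP^n$ is not an $\Ext^1$/$H^1$ obstruction-vanishing statement, nor a statement about families over $G(1,n)$ being constant: one restricts to a fibre $p^{-1}(x)\cong\PP^{n-1}$ (the lines through $x$) and obtains a morphism $\PP^{n-1}\to\mathrm{Gr}(k,\EE_x)$, and the hypothesis $r<n$ enters only through the rigidity lemma that every morphism from $\PP^{n-1}$ to a Grassmannian of subspaces of an $r$-dimensional space is constant once $r\le n-1$ (already for $\mathrm{Gr}(1,r)=\PP^{r-1}$ this is precisely the bound $r<n$, and the bound is sharp: $T_{\PP^n}$ is uniform of rank $n$ and does not split). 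Until that descent step is carried out, your outline is a skeleton rather than a proof; with it, the remaining steps go through as you describe.
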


\begin{proof} See \cite{OSS}; Theorem 3.2.3.
\end{proof}

This is no longer true for higher rank (see Example \ref{ExUni} (2)).
The characterization of rank $n$ (resp. $n+1$) uniform vector bundles on $\PP^n$ was given in  \cite{EHS}, \cite{S} and \cite{V} (resp. \cite{B}, \cite{E} and \cite{El}). The classification shows that every uniform bundle on $\PP^n$ of rank less or equal than $n+1$ is homogeneous. These results give rise to the following problem.

\begin{Problem} To determine the largest integer $r=r(n)$ such that any uniform vector bundle $\EE$ on $\PP^n$ of rank $r$ is homogeneous.
\end{Problem}

An example of non-homogeneous uniform bundle of rank $3n-1$ on $\PP^n$ was constructed by Hirschowitz (see \cite{OSS}; Theorem 3.3.2). Later this result was improved by Dr\'{e}zet who constructed for any integer $n\ge 2$ examples of rank  $2n$ non-homogeneous uniform bundles on $\PP^n$ in \cite{D}. So, we  have $r(n)\leq 2n-1$.

If we extend the study to fields of positive characteristic, examples of rank $n+1$ uniform non-homogeneus vector bundles on the $n$-dimensional projective space have been constructed by Xin in \cite{X}.

The study of uniform vector bundles has been also extended to other varieties: for example, in \cite{BN}, they are considered on quadric surfaces and, more recently, they are considered on Hirzebruch surfaces in \cite{BFF}, on Fano manifolds in \cite{MOS} and \cite{W}, and on multiprojective spaces in \cite{BM}.

Moreover, the interest in uniform vector bundles, and the construction of families of non-homogeneous examples, has been lately renewed for fields of positive characteristic (see \cite{X} as mentioned before), and because of their relation with the study of spaces of matrices of constant rank (see \cite{EM}).

In this paper, we will restrict our attention to Steiner bundles on $\PP^n$ and address the problem of characterizing  uniform Steiner bundles. 

So, let us finish this section recalling its definition and introducing the main object of study of our work.

\begin{Definition} \label{steiner}
 Let $\EE$ be a vector bundle  on $\PP^n$. We say that $\EE$ is a Steiner bundle if it is  defined by a short exact sequence
$$
0\rightarrow \EE \rightarrow \OO_{\PP^n}^x \rightarrow \OO_{\PP^n}(1)^c \rightarrow 0.
$$
\end{Definition}

Notice that the rank and the first Chern class of a Steiner bundle are fixed. Indeed, $c_1(\EE)=-c$ and $\rk(\EE)=x-c$. Steiner bundles could be seen as a generalization of  the cotangent bundle (for $\Omega ^1_{\PP^n}$ we have $c=1$ and $x=n+1$); the cotangent bundle is homogeneous, and therefore uniform, and we will study to what extend Steiner bundles are uniform and homogeneous.

Since $\EE$ is a vector bundle we have $x\ge n+c$. In addition, if we assume that $\EE$ does not contain $\OO_{\PP^n}$ as direct summand, we have $x\le (n+1)c$.

\begin{Definition}
Let $\EE$ be a uniform Steiner bundle on $\PP^n$. We call it a \textbf{$k$-type uniform} bundle if the splitting type, constant on every line, is of the form
$$
(-k^{a_k},-k+1^{a_{k-1}},\ldots,-1^{a_1},0^{a_0}), \:\:\:\:\mbox{with}\:\:\: a_k>0,
$$
where in the previous notation, $-i^{a_i}$ means that we have $a_i$ direct summands $\OO_{\PP^n}(-i)$ in the splitting.
\end{Definition}

Uniform Steiner bundles on $\PP^n$ of $k$-type exist. Indeed, the $k$-th symmetric power $S^k(\Omega ^1_{\PP^n}(1))$ of the cotangent bundle $\Omega ^1_{\PP^n}(1)$ is a $k$-type Steiner uniform bundle on $\PP^n$  since it fits in the exact sequence
$$
0\rightarrow S^k(\Omega ^1_{\PP^n}(1)) \rightarrow \OO_{\PP^n}^{n+k\choose k} \rightarrow \OO_{\PP^n}(1)^{n+k-1\choose k-1} \rightarrow 0
$$
and it has splitting type $(-k, -k+1^{{n-1\choose n-2}},-k+2^{{n \choose n-2}}, \ldots , -1^{{n-3+k\choose n-2}}, 0^{{n-2+k\choose n-2}})$ on every line. Notice that these $k$-type uniform Steiner bundles are homogeneous and our next goal will be to construct examples of non-homogeneous $k$-type uniform Steiner bundle on $\PP^n$.

For $k=1$, we will be able to determine the rank and Chern classes of any 1-type uniform Steiner bundle. Partial results will be achieved for $k>1$. In order to do so, we will construct families of uniform Steiner bundles, which are not homogeneous, giving the explicit defining matrix, and this represents in our opinion a novelty in the area. Indeed, for example, Elencwajg in \cite{E2} and Dr\'{e}zet in \cite{D} propose very elegant ways to construct non-homogeneous uniform bundles, respectively through monads and a geometric interpretation which will be later explained, but explicit examples do not appear.


\section{1-type uniform bundles}\label{sec-1type}
In this part, we will focus our attention on the first step, i.e. we will consider Steiner vector bundles which are uniform with splitting type $(-1^{a},0^b)$, first on $\PP^2$ and later on $\PP^n$, $n\ge 3$.

First of all, we construct a family of 1-type uniform bundles $\EE$ on $\PP^2$ which are different from the ones that we can get trivially by taking $\Omega_{\PP^2}(1)^a \oplus \OO_{\PP^2}^{b-a}$, i.e. $a$ copies of the cotangent bundle on the projective plane plus $b$ trivial summands.
After that, we determine the possible ranks for  1-type uniform Steiner bundles $\EE$ on $\PP^2$ with fixed first Chern class $c_1(\EE)$. We then obtain an upper and lower bound for the rank and we  also show  that the upper and lower bound are sharp. Finally we observe that, by the family constructed, we can achieve all the possible ranks in the admissible range.

Let us start constructing a family which  should be regarded as an example illustrating the idea behind the main
results of this paper.

\begin{Example}\label{ex-1type}
We will build a family of 1-type uniform bundles $\EE$ on $\PP^2$ which are  non-homogeneous when $c = -c_1(\EE)\geq 4$ (this last part will be proven in the following section). To this end, we consider the Steiner bundle $\EE$ on $\PP^2$ defined by the following exact sequence
$$
0\rightarrow \EE \rightarrow \OO_{\PP^2}^8 \stackrel{A}{\rightarrow} \OO_{\PP^2}(1)^3 \rightarrow 0
$$
where the morphism $A$ is represented by the matrix
$$
\left[
\begin{array}{ccccccccccc}
x & y & t & 0 & 0 & 0 & 0 & 0 \\
0 & 0 & 0 & x & y & t & 0 & 0 \\
t & 0 & 0 & 0 & t & 0 & x & y
\end{array}
\right]
$$
and  $x,y,t$ denote the homogeneous  coordinates of $\PP^2$. It is possible to prove by direct computation, or using Macaulay2, that $\EE$ is indeed a 1-type uniform Steiner bundle of rank 5 and $c_1(\EE)=-3$. Let us present the computation and strategy, which will be useful also for the case of higher values of $c=-c_1(\EE)$.

We consider the line $L\subset \PP^2$ defined by the equation $t= \alpha x + \beta y$. Through an action of the linear group which translates in a linear combination of the columns of $A$, and substituting the equation of the line in the matrix, it is possible to get
$$
A_L =
\left[
\begin{array}{ccccccccccc}
x & y & 0 & 0 & 0 & 0 & 0 & 0 \\
0 & 0 & 0 & x & y & 0 & 0 & 0 \\
0 & 0 & 0 & 0 & 0 & 0 & x & y
\end{array}
\right]
$$
which implies that $\EE_L \simeq \OO_{L}(-1)^3 \oplus \OO_L^2$, as wanted.\\
Let us now consider the line $L$ defined by $y = \alpha x$ and, using the same action, we apply the following transformations on the restriction of $A$, starting from
$$
\left[
\begin{array}{ccccccccccc}
x & \alpha x & t & 0 & 0 & 0 & 0 & 0 \\
0 & 0 & 0 & x & \alpha x & t & 0 & 0 \\
t & 0 & 0 & 0 & t & 0 & x & \alpha x
\end{array}
\right].
$$
We first vanish the $\alpha x$ entry in the fifth column in order to subsequently vanish the $t$ entry in the first one. Finally we arrive at the matrix
\begin{equation}\label{mat-L2}
\left[
\begin{array}{ccccccccccc}
x & 0 & t & 0 & 0 & 0 & 0 & 0 \\
0 & 0 & 0 & x & 0 & t & 0 & 0 \\
0 & 0 & 0 & 0 & t & 0 & x & 0
\end{array}
\right]
\end{equation}
which tells us that, also in this case, $\EE_L \simeq \OO_{L}(-1)^3 \oplus \OO_L^2$.\\
Finally, to study the line $x=0$, we apply the same technique as in the previous case and we can conclude that $\EE$ is actually a 1-type uniform Steiner bundle on $\PP^2$ with splitting type $(-1^3,0^2)$.

\vskip 4mm
We can generalize the previous example in order to have a $1$-type uniform Steiner bundle $\EE$ on $\PP^2$ of rank $c+2$ and first Chern class $c_1(\EE)=-c$, which is not homogeneous if $c\geq 4$, defined by the short exact sequence
$$
0\rightarrow \EE \rightarrow \OO_{\PP^2}^{2c+2} \stackrel{A}{\rightarrow} \OO_{\PP}(1)^c \rightarrow 0
$$
Let us construct the matrix $A$ iteratively from the previous one: let us add a row and two columns to get
$$
\left[
\begin{array}{ccccccccccc}
x & y & t & 0 & 0 & 0 & 0 & 0 & 0 & 0\\
0 & 0 & 0 & x & y & t & 0 & 0 & 0 & 0\\
t & 0 & 0 & 0 & t & 0 & x & y & 0 & 0 \\
0 & 0 & 0 & t & 0 & 0 & 0 & t & x & y
\end{array}
\right]
$$
Notice that the variable $x,y$ in the last row belong to the added columns, while we have non zero entries $t$ under two variables $x$ and $y$ which do not belong to the same line. Notice that we can describe the construction of the matrix as follows. Consider the matrix defining $c$ copies of $\Omega_{\PP^2}(1)$, i.e. a matrix with $c$ blocks of type $[x\:\:y\:\:t]$ put ``diagonally''. Erase then all columns which contain only the $t$ variable, except for the two columns where $t$ is in the first or second row. Then, starting from the third row on, we will add, again ``diagonally'', blocks of type $[t\:\:0]$ and $[0\:\:t]$ below the $[x\:\:y]$ blocks in the matrices (including the ones inside the two $[x\:\:y\:\:t]$ blocks that remained). \\
The described process defines the following $c\times (2c+2)$ matrix
$$
A = \left[
\begin{array}{cccccccccccccccccccc}
x & y & t & 0 & 0 & 0 & 0 & 0 & 0 & 0 & 0 & 0 & \cdots & & & \cdots& 0 & 0\\
0 & 0 & 0 & x & y & t & 0 & 0 & 0 & 0 & 0 & 0& \cdots & & & \cdots& 0 & 0\\
t & 0 & 0 & 0 & t & 0 & x & y & 0 & 0 & 0 & 0& \cdots & & & \cdots& 0 & 0\\
0 & 0 & 0 & t & 0 & 0 & 0 & t & x & y & 0 & 0& \cdots & & & \cdots& 0 & 0\\
0 & 0 & 0 & 0 & 0 & 0 & t & 0 & 0 & t & x & y& \cdots & &  & \cdots& 0 & 0\\
\vdots & & & & & & & & & & & \vdots && \ddots & & & &\vdots\\
0 & \cdots & & & & & & & & & \cdots& 0 &  t & 0 & 0 & t & x & y
\end{array}
\right]
$$
We claim that $A$ defines a $1$-type uniform Steiner  bundle in $\PP^2$ of rank $c+2$ and first Chern class $c_1(\EE)=-c$. Indeed, for every line $L$ defined by the equation $t = \alpha x + \beta y$ it is possible to apply linear combination of the columns of the restricted matrix in order to get
$$
A _L = \left[
\begin{array}{cccccccccccccccccccc}
x & y & 0 & 0 & 0 & 0 & 0 & 0 & 0 & 0 & 0 & 0 & \cdots & & & \cdots& 0 & 0\\
0 & 0 & 0 & x & y & 0 & 0 & 0 & 0 & 0 & 0 & 0& \cdots & & & \cdots& 0 & 0\\
0 & 0 & 0 & 0 & 0 & 0 & x & y & 0 & 0 & 0 & 0& \cdots & & & \cdots& 0 & 0\\
0 & 0 & 0 & 0 & 0 & 0 & 0 & 0 & x & y & 0 & 0& \cdots & & & \cdots& 0 & 0\\
0 & 0 & 0 & 0 & 0 & 0 & 0 & 0 & 0 & 0 & x & y& \cdots & &  & \cdots& 0 & 0\\
\vdots & & & & & & & & & & & \vdots && \ddots & & & &\vdots\\
0 & \cdots & & & & & & & & & \cdots& 0 &  0 & 0 & 0 & 0 & x & y
\end{array}
\right]
$$
which implies that $\EE_L \simeq \OO_{L}(-1)^c \oplus \OO_L^{2}$.

Let us consider now $L$ defined by $y = \alpha x$. Observing the position of the block $[0,t]$ in the last row, we can erase the entry $\alpha x$ which belongs in the same column of $t$ and we obtain a column $[0 \:\:\cdots \:\:0 \:\: t]^T$ with which we vanish the other $t$ entry in the last row. Therefore we can iterate this process in order to obtain a matrix of the same type of (\ref{mat-L2}), which also implies that $\EE_L \simeq \OO_{L}(-1)^c \oplus \OO_L^{2}$.

Finally, considering the line $x=0$, we apply a technique completely analogue to the previous computation and we arrive to the same conclusion, proving the 1-type uniformity of the bundle $\EE$.
\end{Example}

\begin{Theorem}\label{mainthm1}
Fix an integer $c\ge 2$. Let $\EE$ be  a 1-type uniform Steiner  vector bundle, with no trivial summands on $\PP^2$, of rank $r$, defined by the exact sequence
$$
0 \rightarrow \EE \rightarrow \OO_{\PP^2}^{r+c} \rightarrow \OO_{\PP^2}(1)^c \rightarrow 0.
$$
Then $c+2\leq r \leq 2c$. Moreover, these bounds are sharp and there exists a rank $r$  1-type uniform Steiner  vector bundle $\EE$ on $\PP^2$  with $c_1(\EE)=-c$ and $\rk(\EE)=r$ for any $r\in[c+2,2c]$.
\end{Theorem}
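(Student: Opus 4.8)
The plan is to translate the Steiner bundle into linear-algebra data on the dual plane $\check{\PP}^2=\PP(V^*)$, reinterpret $1$-type uniformity there as a fibrewise surjectivity condition, and then read off both bounds from Chern-class identities; the two extreme ranks and all intermediate ones are then produced by explicit direct sums.

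First I would set up notation and dispose of the upper bound. Write the defining sequence as $0\to\EE\to W\otimes\OO_{\PP^2}\xrightarrow{A}U\otimes\OO_{\PP^2}(1)\to 0$ with $\dim_{\KK}W=r+c$ and $\dim_{\KK}U=c$, and let $\bar A\colon W\to U\otimes V^*$ be the map induced on global sections, so that $A$ is ``$\bar A$ evaluated pointwise''. Then $H^0(\EE)=\ker\bar A$, and if $K:=\ker\bar A\ne 0$ one has $K\otimes\OO_{\PP^2}\subseteq\ker A=\EE$ and, using $W\otimes\OO_{\PP^2}=(K\otimes\OO_{\PP^2})\oplus\big((W/K)\otimes\OO_{\PP^2}\big)$, a direct sum decomposition $\EE=(K\otimes\OO_{\PP^2})\oplus\EE''$; hence ``$\EE$ has no trivial summand'' is equivalent to $\bar A$ being injective. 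In particular $r+c=\dim W\le\dim(U\otimes V^*)=3c$, i.e.\ $r\le 2c$. Set $Q:=\operatorname{coker}\bar A$, so $\dim_{\KK}Q=2c-r$.

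Next I would reformulate $1$-type uniformity. A line $L\subset\PP^2$ corresponds to a point $[\xi]\in\check{\PP}^2$ with $H^0(\OO_L(1))=V_L^*=V^*/\langle\xi\rangle$; restricting the sequence to $L$ and taking $H^0$ identifies $H^0(\EE_L)$ with the kernel of $W\xrightarrow{\bar A}U\otimes V^*\twoheadrightarrow U\otimes V_L^*$. Since $\EE_L$ is a subbundle of $\OO_L^{\,r+c}$ all its splitting exponents are $\le 0$, and together with $c_1(\EE_L)=-c$ and $\rk\EE_L=r$ this shows that $\EE_L\cong\OO_L(-1)^c\oplus\OO_L^{\,r-c}$ if and only if $h^0(\EE_L)=r-c$, i.e.\ if and only if $W\to U\otimes V_L^*$ is surjective, i.e.\ if and only if the composite $\psi_\xi\colon U\cong U\otimes\langle\xi\rangle\hookrightarrow U\otimes V^*\twoheadrightarrow Q$ is surjective. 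Thus $\EE$ is $1$-type uniform exactly when $\psi_\xi$ is surjective for every $0\ne\xi\in V^*$; and since $\psi\in V\otimes\Hom(U,Q)$ is a morphism $\Psi\colon U\otimes\OO_{\check{\PP}^2}\to Q\otimes\OO_{\check{\PP}^2}(1)$ over $\check{\PP}^2\cong\PP^2$, this is precisely the statement that $\Psi$ is fibrewise surjective. Consequently $\mathcal N:=\ker\Psi$ is a vector bundle on $\PP^2$ of rank $c-\dim Q=r-c$ fitting into
\begin{equation*}
0\to\mathcal N\to\OO_{\check{\PP}^2}^{\,c}\xrightarrow{\Psi}Q\otimes\OO_{\check{\PP}^2}(1)\to 0 .
\end{equation*}
Now the lower bound follows by ruling out $r-c\in\{0,1\}$ using $c\ge 2$: if $r=c$ then $\mathcal N=0$ and $\dim Q=c\ge 1$, so $\OO_{\check{\PP}^2}^{\,c}\cong Q\otimes\OO(1)$, contradicting a comparison of first Chern classes; if $r=c+1$ then $\mathcal N$ is a line bundle and $Q\otimes\OO(1)\cong\OO(1)^{\,c-1}$, so $c_1(\mathcal N)=(1-c)h$ forces $\mathcal N\cong\OO(1-c)$, and then the identity $c(\OO(1-c))\,c(\OO(1)^{\,c-1})=1$ fails in degree two, since $(1+(1-c)h)(1+h)^{c-1}=1-\tfrac{c(c-1)}{2}h^2$ in $H^*(\PP^2)$ and $c(c-1)\ne 0$. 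Hence $r\ge c+2$. For sharpness and the realisation of every intermediate rank: Example \ref{ex-1type} provides, for each $c\ge 2$, a $1$-type uniform Steiner bundle on $\PP^2$ with $c_1=-c$ and rank $c+2$ and with no trivial summand ($H^0=0$ for it); $\Omega^1_{\PP^2}(1)^{\oplus c}$, a direct sum of $c$ twisted Euler sequences, is a $1$-type uniform Steiner bundle with $c_1=-c$, rank $2c$ and no trivial summand; and for $c+2\le r\le 2c$, writing $r=c+b$ with $2\le b\le c$, the bundle $\EE_0\oplus\Omega^1_{\PP^2}(1)^{\oplus(b-2)}$ works, where $\EE_0$ is the bundle of Example \ref{ex-1type} for the parameter $c-b+2\ge 2$: a direct sum of $1$-type uniform Steiner bundles without trivial summands is again one, and this bundle has $c_1=-(c-b+2)-(b-2)=-c$ and rank $(c-b+4)+(2b-4)=c+b=r$.

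The step I expect to require real care is the reformulation: turning ``the splitting type on $L$ is $(-1^c,0^{r-c})$'' into a single rank statement that is uniform in $L$, and recognising the resulting family of linear maps as one morphism of vector bundles on $\check{\PP}^2$ whose kernel and cokernel carry all the numerical constraints. Once this dictionary is in place, the inequalities $c+2\le r\le 2c$ and the examples follow with essentially no further computation.
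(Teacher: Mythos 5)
Your proposal is correct, and it is worth separating its two halves when comparing with the paper. For the bounds, your route is essentially the paper's argument in a more explicit linear-algebra dress: the paper also derives the lower bound from a fibrewise-surjective morphism on the dual plane, namely $H^1(\EE(-1))\otimes\OO_{(\PP^2)^*}(-1)\to H^1(\EE)\otimes\OO_{(\PP^2)^*}$, which is exactly your $\Psi\colon U\otimes\OO\to Q\otimes\OO(1)$ up to a twist (since $H^1(\EE(-1))\cong U$ and $H^1(\EE)\cong Q=\operatorname{coker}\bar A$ when $\bar A$ is injective). The one genuine difference there is that the paper concludes by citing Dolgachev--Kapranov (rank of a Steiner bundle on $\PP^2$ is at least $2$) to get $\rk\ker\Psi=r-c\ge 2$, whereas you rule out $r-c\in\{0,1\}$ by hand with the Whitney formula; your version is self-contained and the degree-two computation $(1+(1-c)h)(1+h)^{c-1}=1-\tfrac{c(c-1)}{2}h^2$ is right. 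For the realization of all ranks in $[c+2,2c]$ the approaches genuinely diverge: the paper starts from the minimal-rank bundle $\EE_{c+2}$ of Example \ref{ex-1type} and builds the larger ranks as iterated extensions $0\to\EE_{c+2}\to\EE_{c+2+i}\to\OO_{\PP^2}^i\to 0$ by appending $i\le c-2$ independent columns to the defining matrix (a construction that also feeds its later ``path'' picture in Remark 15), while you take direct sums $\EE_0\oplus\Omega^1_{\PP^2}(1)^{b-2}$ with $\EE_0$ the Example-\ref{ex-1type} bundle for the smaller parameter $c-b+2$; your arithmetic ($c_1=-c$, rank $c+b$, restriction $\OO_L(-1)^c\oplus\OO_L^{b}$) checks out, and this is simpler, at the cost of producing only decomposable witnesses. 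Both halves rest, as the paper's proof does, on Example \ref{ex-1type} supplying a rank-$(c+2)$ uniform bundle with $H^0=0$ for each $c\ge 2$, so you are not assuming anything the paper does not.
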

\begin{proof}
The upper  bound $r\le 2c$ immediately follows from the hypothesis that $\EE $ has no trivial summands. Let us now prove that $c+2\le r$. To this end, we consider the short exact sequence
$$
0 \rightarrow \EE(-1) \rightarrow \EE \rightarrow \EE_L \rightarrow 0
$$
and we recall that by hypothesis $\EE_L\simeq \OO_L(-1)^c \oplus \OO_L^{r-c}$, for every line $L \subset \PP^2$. We can observe directly that we can assume $r-c >0$ or else we would have that $\EE \simeq \OO_{\PP^2}(-1)^c$, as a consequence of \cite[Theorem 3.2.1]{OSS}. Moreover, by the uniformity hypothesis, we have that the map $f_L$ induced in the exact cohomology sequence
$$
H^1(\EE(-1)) \stackrel{f_L}{\longrightarrow} H^1(\EE) \longrightarrow H^1(\EE_L) =0
$$
is surjective for any line $L\subset \PP^2$.  Therefore we have a surjective morphism of vector bundles on the dual projective plane $(\PP^2)^*$
\begin{equation}\label{steinDual}
H^1(\EE(-1)) \otimes \OO_{(\PP^2)^*}(-1) \longrightarrow H^1(\EE) \otimes \OO_{(\PP^2)^*}\longrightarrow 0,
\end{equation}
whose kernel is a Steiner bundle, which we know must have rank bigger or equal than two, see \cite[Proposition 3.9]{DK}. The fact that $\EE$ does not have trivial summands is equivalent to asking that $H^0(\EE)=0$, from which  we can compute $h^1(\EE) = 2c -r$ and obviously $h^1(\EE(-1))=c$. Therefore we get $c-2c+r \geq 2$ or, equivalently,  $r\geq c+2$ as we wanted.

By Example \ref{ex-1type}, we know for any value of $c\ge 2$ the existence  of  rank $c+2$, 1-type uniform Steiner bundles $\EE_{c+2}$ on $\PP^2$ with first Chern class $-c$. So, the lower bound is achieved.
 We will construct the higher admissible rank cases as iterated extensions starting with  $\EE _{c+2}$ as described in the
  following diagram

$$
\xymatrix{
& 0 \ar[d] & 0 \ar[d] & 0 \ar[d]\\
0\ar[r] & \EE_{c+2}	\ar[r] \ar[d] & \OO_{\PP^2}^{2c+2} \ar[d] \ar[r]^{A} & \OO_{\PP^2}(1)^c \ar[d] \ar[r] & 0 \\
0\ar[r] & \EE_{c+2+i}\ar[r] \ar[d] & \OO_{\PP^2}^{2c+2+i} \ar[d] \ar[r]^{[A|C]} & \OO_{\PP^2}(1)^c \ar[d] \ar[r] & 0 \\
0 \ar[r] & \OO_{\PP^2}^{i} \ar[r] \ar[d] & \OO_{\PP^2}^{i} \ar[r] \ar[d] & 0\\
& 0 & 0
}
$$
where $[A|C]$ is constructed from $A$ adding $i$ linearly independent columns of linear forms. It is straightforward to notice that we can at most add $c-2$ linearly independent columns because $\dim H^1(\EE)=\dim H^0  \OO_{\PP^2}(1)^c -\dim H^0  \OO_{\PP^2}^{2c+2}= c-2$. So, when $i$ sweeps the interval $[0,c-2]$, we get all possible values of $r\in [c+2,2c]$.

Finally using the exact sequence
$$0 \longrightarrow \EE_{c+2} \longrightarrow \EE_{c+2+i} \longrightarrow  \OO_{\PP^2}^{i}  \longrightarrow 0$$ and the fact that $\EE_{c+2}$ is a 1-type uniform Steiner bundle we easily check hat $\EE _{c+2+i}$ is also a 1-type uniform Steiner bundle  which proves what we want.
\end{proof}
\begin{Remark}
The equality $r=2c$ in the previous result holds if and only if $\EE$ is the direct sum of $c$ copies of $\Omega_{\PP^2}(1)$.
\end{Remark}

In Theorem \ref{mainthm1} we have seen that, given a 1-uniform Steiner bundle $\EE$ on $\PP^2$ of rank $c+2 \leq r<2c$ and first Chern class $-c$, using iterate extensions of $\EE$ by $ \OO_{\PP^2}$ we construct a 1-uniform Steiner bundle $\EE_{max}$ on $\PP^2$ with first Chern class $-c$ and maximal rank, namely, $2c$. We would like to know if  given a 1-uniform Steiner bundle $\EE$ on $\PP^2$ of rank $c+2<r\leq 2c$ and first Chern class $-c$ we can also construct a 1-uniform Steiner bundle $\EE_{min}$ on $\PP^2$ with first Chern class $-c$ and minimal rank, namely, $c+2$.  This is possible and we will prove it more in general for the case of 1-type uniform Steiner bundles on $\PP^n$, when we will introduce a geometric interpretation of what is going on  inspired by the ideas developed by Dr\'{e}zet in \cite{D}.

Generalizing the family of bundles introduced for the case $n=2$, let us now concentrate on 1-type uniform vector bundles on $\PP^n$. First we construct a specific family, which we will shortly prove to be a family with the minimum allowed rank.

\begin{Example}\label{minRankPn}
Let us consider the Steiner bundle $\EE$ on $\PP^n$, $n\geq 2$, defined by the following short exact sequence
\begin{equation}\label{exPn}
0\longrightarrow \EE \longrightarrow \OO_{\PP^n}^{2c+2n-2} \stackrel{A_n}{\longrightarrow} \OO_{\PP^n}(1)^{c} \longrightarrow 0.
\end{equation}
We will now explain how to construct the defining matrix $A_n$, starting from what we already know in $\PP^2$. Indeed, we can write the case $n=2$, denoting by $x_0,x_1,x_2$ the variables, in the following form
$$
A_2 =
\left[
\begin{array}{cccccccccccccccccccccccccccc}
x_2 & 0 & x_0 & 0 & 0 & \cdots  &  & \cdots& 0 & \vline & x_1& 0 & & & & & 0\\
0 & x_2 & 0 & x_0 & 0 & &  & & \vdots & \vline & 0 & x_1& 0 & & & & \vdots\\
 0 & 0 & x_2 & 0 & x_0 & 0 & & &  & \vline &  0 & x_2 & x_1& 0 & & & \vdots\\
 \vdots & & \ddots& \ddots & \ddots & \ddots & \ddots& & \vdots& \vline & \vdots & & \ddots & \ddots & \ddots & &  \vdots\\
0& 0 & \cdots & 0 & x_2 & 0 & x_0 & 0 & 0 &  \vline & 0 & \cdots& 0 & x_2 & x_1& 0& 0\\
0& 0 & \cdots & 0 & 0 & x_2 & 0 & x_0 & 0 &  \vline & 0 & \cdots&0 & 0 & x_2 & x_1& 0\\
0& 0 & \cdots & 0 &  0 & 0 & x_2 & 0 & x_0 &  \vline & 0 & \cdots&0 & 0 & 0 & x_2 & x_1\\
\end{array}
\right]
$$
with $A_2$ a $c\times (2c+2)$-matrix.

Every time we want to add a further variable $x_i$, depending on the parity of $i$, we will add a diagonal of $0$'s and a diagonal of the variable $x_i$ both on the left side (if $i$ is even) or on the right side (if $i$ is odd) of the matrix $A_{i-1}$, and we will add such diagonal in order to only add two additional columns to the matrix $A_{i-1}$. Moreover we will insert an ``almost complete'' diagonal of variables $x_i$ on the complementary side, ``before'' the diagonal of the $(i-3)$-rd variable and multiplied by -1 if $i$ is odd and ``before'' the $(i-1)$-st variable when $i$ is even. Let us explain better such construction, specifying that by ``almost complete'' diagonal we mean without the first two elements when added on the right side and without the last two elements when added to the left side. Moreover, by ``before'' the diagonal of the previous matrix we mean closer to the central columns of the matrix.

An explicit example will help us understand the described process. Let us construct $A_3$ from $A_2$. We will add on  the right side the diagonals of $0$'s and of $x_3$ and we will add an almost complete diagonal of $-x_3$ on the left side.
$$
A_3 =
\left[
\begin{array}{cccccccccccccccccccccccccccc}
x_2 & 0 & x_0 & -x_3 & 0 & \cdots  &  & \cdots& 0 & \vline & x_1& 0 & x_3 & 0 & & & & & 0\\
0 & x_2 & 0 & x_0 & -x_3 & & & & \vdots & \vline & 0 & x_1& 0 & x_3 & 0 & & & & \vdots\\
 0 & 0 & x_2 & 0 & x_0 & -x_3 & & &  & \vline &  0 & x_2 & x_1& 0 & x_3 & 0 & & & \vdots\\
 \vdots & & \ddots& \ddots & \ddots & \ddots & \ddots& & \vdots& \vline & \vdots & & \ddots & \ddots & \ddots & \ddots & \ddots & &  \vdots\\
0& 0 & \cdots & 0 & x_2 & 0 & x_0 & -x_3 & 0 &  \vline & 0 & \cdots& 0 & x_2 & x_1& 0 & x_3& 0& 0\\
0& 0 & \cdots & 0 & 0 & x_2 & 0 & x_0 & 0 &  \vline & 0 & \cdots&0 & 0 & x_2 & x_1 & 0 & x_3 & 0\\
0& 0 & \cdots & 0 &  0 & 0 & x_2 & 0 & x_0 & \vline & 0 & \cdots&0 & 0 & 0 & x_2 & x_1& 0 & x_3\\
\end{array}
\right]
$$
Adding several variables, as described before, we obtain the following defining matrix (which we show for odd values of $n$, for even values simply consider $x_n=0$ and eliminate the last two columns on the right)
$$
A_n =
\left[
\begin{array}{cccccccccccccccccccccccccccc}
x_{n-1} & 0  & x_{n-3}& -x_n & & \cdots & x_2 & -x_5 & x_0 & -x_3 & 0 & \cdots   && \cdots& 0 & \vline\\
0& x_{n-1}  & 0 &  x_{n-3}& -x_n & \cdots  &-x_7 & x_2 & -x_5 & x_0 &- x_3 & & & & \vdots & \vline\\
 0 & 0 & x_{n-1} &0 &x_{n-3}  & \cdots & & -x_7 & x_2 & -x_5 & x_0 & -x_3 &  &&   \vdots & \vline\\
 \vdots & &  & & && & &  \ddots& \ddots & \ddots & \ddots & \ddots& & \vdots& \vline\\
0& 0 & \cdots &  & && &  & & -x_7 & x_2 &-x_5 & x_0 & -x_3 & 0 &  \vline\\
0& 0 & \cdots  & & & & & & & 0 & 0 & x_2 & 0 & x_0 & 0 &  \vline\\
0& 0 & \cdots  & & & & & & &0 &  0 & 0 & x_2 & 0 & x_0 & \vline\\
\end{array}
\right.
$$
$$
\:\:\:\:\:\:\:\:\:
\left.
\begin{array}{cccccccccccccccccccccccccccc}
\vline & x_1& 0 & x_3 & 0 & & & & & & &\cdots  & & & \cdots& 0& 0\\
\vline & 0 & x_1& 0 & x_3 & 0 & & & & & & \cdots  & & & \cdots& 0& 0\\
\vline &  0 & x_2 & x_1& x_4 & x_3 & x_6 & & & & & \cdots& & & \cdots& 0& 0\\
\vline & \vdots & & \ddots & \ddots & \ddots & \ddots & \ddots & & & & &  & & & & \vdots\\
\vline & 0 & \cdots& 0 & x_2 & x_1& x_4 & x_3& x_6& x_5 &  & \cdots & 0 & x_n & 0 & 0 & 0\\
\vline & 0 & \cdots&0 & 0 & x_2 & x_1 & x_4 & x_3 & x_6 & x_5 & \cdots &x_{n-2} & 0 & x_n & 0 & 0\\
\vline & 0 & \cdots&0 & & 0 & x_2 & x_1 & x_4 & x_3 & x_6 & \cdots & x_{n-1}&x_{n-2} & 0 & x_n & 0\\
\vline & 0 & \cdots&0 & & 0 & 0 & x_2 & x_1& x_4 & x_3 & \cdots & x_{n-4} & x_{n-1}&x_{n-2} & 0 & x_n \\
\end{array}
\right]
$$
which is of dimension $c \times (2c+2n-2)$ and it is the defining matrix of the short exact sequence in (\ref{exPn}).
We claim that $\EE$ is 1-type uniform vector bundle on $\PP^n$ which is not homogeneous for $c\geq 4$ (again not being homogeneous will be proven in the subsequent section). Indeed, let us first notice that, proving that the splitting is constant to each line is equivalent to prove that we have the same splitting type every time we fix any two of the variables and express the others as all possible linear combinations of the fixed ones. We will do the computations for an example, and, due to the specific construction of our matrices, the general case follows by iterating such process.

Let us consider the defining matrix
$$
\left[
\begin{array}{cccccccccccccccccccccccccccccccccccccccccccc}
x_4 & 0 & x_2 & -x_5 & x_0 & -x_3 & 0 & 0 & x_1 & 0 & x_3 & 0 & x_5 & 0 & 0 & 0\\
0 & x_4 & 0 & x_2 & -x_5 & x_0 & -x_3 & 0 & 0 & x_1 & 0 & x_3 & 0 & x_5 & 0 & 0\\
0 & 0 & x_4 & 0 & x_2 & 0 & x_0 & 0 & 0 & x_2 & x_1 & x_4 & x_3 & 0 & x_5 & 0\\
0 & 0 & 0 & x_4 & 0 & x_2 & 0 & x_0 & 0 & 0 & x_2 & x_1 & x_4 & x_3 & 0 & x_5
\end{array}
\right]
$$
Notice that the we can consider the variables as ``paired'' because of the symmetries in the matrix: $x_0$ with $x_1$, $x_2$ with $x_3$ and $x_4$ with $x_5$. This tells us that we do not have to consider all possible pairs, but it is sufficient to take $(x_0,x_i)$, for $i=1,\ldots,5$,  $(x_2,x_i)$, for $i=3,\ldots,5$, and $(x_4,x_5)$.
We will only explicit the case $(x_0,x_1)$; all the other cases follow from analogue computations.

First, we must express the other variables as linear combinations of $x_0$ and $x_1$, indeed, let us denote $x_i = a_i x_0 + b_i x_1$, for $i=2,\ldots,5$. Therefore, the previous matrix, restricted through the linear combinations and already applying operations on its columns, can be written as
$$
\left[
\begin{array}{cccccccccccccccccccccccccccccccccccccccccccc}
a_4 x_0  & 0 & a_2 x_0 & -a_5 x_0 & x_0 & -a_3 x_0 & 0 & 0 & \vline\\
0 & a_4 x_0 + b_4 x_1 & 0 & a_2 x_0 + b_2 x_1 & -a_5 x_0 - b_5 x_1 & x_0 & -a_3 x_0 - b_3 x_1 & 0 & \vline\\
0 & 0 & a_4 x_0 + b_4 x_1 & 0 & a_2 x_0 + b_2 x_1 & 0 & x_0 & 0 & \vline\\
0 & 0 & 0 & b_4 x_1 & 0 & b_2 x_1 & 0 & x_0 & \vline\\
\end{array}
\right.
$$
$$
\left.
\begin{array}{cccccccccccccccccccccccccccccccccccccccccccccc}
\vline & x_1 & 0 & a_3 x_0 & 0 & a_5 x_0 & 0 & 0 & 0\\
\vline & 0 & x_1 & 0 & a_3 x_0 + b_3 x_1 & 0 & a_5 x_0 + b_5 x_1 & 0 & 0\\
\vline & 0 & a_2 x_0 + b_2 x_1 & x_1 & a_4 x_0 + b_4 x_1 & a_3 x_0 + b_3 x_1 & 0 & a_5 x_0 + b_5 x_1 & 0\\
\vline & 0 & 0 & b_2 x_1 & x_1 & b_4 x_1 & b_3 x_1 & 0 & b_5 x_1
\end{array}
\right]
$$
If $a_4$ and $b_5$ are different from zero, then using the first (respectively last) column we can vanish all the other multiples of $x_0$ (of $x_1$) in the first row (last row); we get
$$
\left[
\begin{array}{cccccccccccccccccccccccccccccccccccccccccccc}
 x_0  & 0 & 0 & 0 & 0 & 0 & 0 & 0 & \vline\\
0 & a_4 x_0 + b_4 x_1 & 0 & a_2 x_0 + b_2 x_1 & -a_5 x_0 - b_5 x_1 & x_0 & -a_3 x_0 - b_3 x_1 & 0 & \vline\\
0 & 0 & a_4 x_0 + b_4 x_1 & 0 & a_2 x_0 + b_2 x_1 & 0 & x_0 & 0 & \vline\\
0 & 0 & 0 & 0 & 0 & 0 & 0 & x_0 & \vline\\
\end{array}
\right.
$$
$$
\left.
\begin{array}{cccccccccccccccccccccccccccccccccccccccccccccc}
\vline & x_1 & 0 & 0 & 0 & 0 & 0 & 0 & 0\\
\vline & 0 & x_1 & 0 & a_3 x_0 + b_3 x_1 & 0 & a_5 x_0 + b_5 x_1 & 0 & 0\\
\vline & 0 & a_2 x_0 + b_2 x_1 & x_1 & a_4 x_0 + b_4 x_1 & a_3 x_0 + b_3 x_1 & 0 & a_5 x_0 + b_5 x_1 & 0\\
\vline & 0 & 0 & 0 & 0 & 0 & 0 & 0 & x_1
\end{array}
\right]
$$
Now, using the $x_0$ in the 6th column and the $x_1$ in the 11th, we can erase such variables in the corresponding rows, obtaining
$$
\left[
\begin{array}{cccccccccccccccccccccccccccccccccccccccccccc}
 x_0  & 0 & 0 & 0 & 0 & 0 & 0 & 0 & x_1 & 0 & 0 & 0 & 0 & 0 & 0 & 0\\
0 & b_4 x_1 & 0 &  b_2 x_1 &   -b_5 x_1 & x_0 &  -b_3 x_1 & 0 &  0 & x_1 & 0 &  b_3 x_1 & 0 &  b_5 x_1 & 0 & 0\\
0 & 0 & a_4 x_0  & 0 & a_2 x_0  & 0 & x_0 & 0 & 0 & a_2 x_0  & x_1 & a_4 x_0 & a_3 x_0  & 0 & a_5 x_0  & 0\\
0 & 0 & 0 & 0 & 0 & 0 & 0 & x_0 & 0 & 0 & 0 & 0 & 0 & 0 & 0 & x_1
\end{array}
\right]
$$
Following an analogous reasoning that regards whether the other coefficients vanish or not, and studying all possible cases, it is possible to conclude that the splitting in this case is of type $(-1^4,0^{12})$.

\end{Example}

Thanks to the family presented in the previous example, we are now ready to prove the following result.
\begin{Theorem}\label{mainthm2}
Fix an integer $c\ge 2$. Let $\EE$ be  a 1-type uniform Steiner  vector bundle, with no trivial summands, on $\PP^n$  of rank $r$, defined by the exact sequence
$$
0 \rightarrow \EE \rightarrow \OO_{\PP^n}^{r+c} \rightarrow \OO_{\PP^n}(1)^c \rightarrow 0.
$$
Then $c+2(n-1)\leq r \leq cn$. Moreover, these bounds are sharp and there exists a rank $r$  1-type uniform Steiner vector bundle $\EE$ on $\PP^n$  with $c_1(\EE)=-c$ and $\rk(\EE)=r$ for any $r\in[c+2(n-1),cn]$.
\end{Theorem}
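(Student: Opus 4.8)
The plan is to prove the two inequalities and then to realise every intermediate value of $r$, globalising the strategy of Theorem~\ref{mainthm1}. The upper bound is immediate: as recalled in Section~\ref{sec-prel}, the absence of trivial summands forces $x=r+c\le(n+1)c$, i.e. $r\le cn$, and equality is attained by $\bigoplus^{c}\Omega^{1}_{\PP^{n}}(1)$, which the Euler sequence exhibits as a $1$-type uniform Steiner bundle of rank $cn$, first Chern class $-c$ and with $H^{0}=0$. So everything rests on the lower bound $r\ge c+2(n-1)$ and on the intermediate ranks.

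For the lower bound I would run the cohomology argument of Theorem~\ref{mainthm1} over the whole family of lines rather than over one pencil through a point. Let $G:=G(1,n)$, let $\mathbb{F}\subset\PP^{n}\times G$ be the point--line incidence variety, with projections $p:=\pi_{1}|_{\mathbb{F}}\colon\mathbb{F}\to\PP^{n}$ and $q:=\pi_{2}|_{\mathbb{F}}\colon\mathbb{F}\to G$, and let $\mathcal{Q}$ be the rank-$(n-1)$ universal quotient bundle on $G$, so that a line is a plane $\mathcal{U}\subset V\otimes\OO_{G}$ with $\mathcal{Q}=(V\otimes\OO_{G})/\mathcal{U}$. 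Then $\mathbb{F}$ is the zero locus of the regular section of $\OO_{\PP^{n}}(1)\boxtimes\mathcal{Q}$ induced by $\OO_{\PP^{n}}(-1)\boxtimes\OO_{G}\hookrightarrow V\otimes\OO\twoheadrightarrow\OO_{\PP^{n}}\boxtimes\mathcal{Q}$, so $\Ii_{\mathbb{F}}$ is resolved by the Koszul complex with terms $\OO_{\PP^{n}}(-k)\boxtimes\bigwedge^{k}\mathcal{Q}^{\vee}$, $1\le k\le n-1$. Tensoring $0\to\Ii_{\mathbb{F}}\to\OO\to\OO_{\mathbb{F}}\to0$ with $\pi_{1}^{*}\EE$ and pushing forward along $\pi_{2}$, one uses: that $R^{i}\pi_{2*}\pi_{1}^{*}\EE=H^{i}(\EE)\otimes\OO_{G}$ (zero for $i=0$ since $H^{0}(\EE)=0$, equal to $H^{1}(\EE)\otimes\OO_{G}$ for $i=1$); that the uniform splitting $\EE_{L}\cong\OO_{L}(-1)^{c}\oplus\OO_{L}^{r-c}$ makes $\mathcal{G}:=q_{*}p^{*}\EE$ locally free of rank $r-c$ with $R^{1}q_{*}p^{*}\EE=0$; and that the hypercohomology spectral sequence of the Koszul resolution, combined with the Steiner vanishing $H^{i}(\EE(-k))=0$ valid for all $k\ge1$ and $0\le i\le n-1$ except for $H^{1}(\EE(-1))=\KK^{c}$, forces $R^{1}\pi_{2*}(\pi_{1}^{*}\EE\otimes\Ii_{\mathbb{F}})\cong(\mathcal{Q}^{\vee})^{\oplus c}$. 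The long exact sequence then yields an exact sequence $0\to\mathcal{G}\to(\mathcal{Q}^{\vee})^{\oplus c}\to H^{1}(\EE)\otimes\OO_{G}\to0$; for $n=2$ this is precisely the sequence~(\ref{steinDual}) used in Theorem~\ref{mainthm1}, since there $\mathcal{Q}^{\vee}=\OO_{(\PP^{2})^{*}}(-1)$.

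From this sequence $c(\mathcal{G})=c(\mathcal{Q}^{\vee})^{c}$, so $c_{2(n-1)}(\mathcal{G})$ is the degree-$2(n-1)$ component of $c(\mathcal{Q}^{\vee})^{c}$, which lives in the top group $H^{4(n-1)}(G)=\ZZ[\mathrm{pt}]$. Since $2(n-1)$ is even, the signs cancel and this component equals the degree-$2(n-1)$ component of the effective class $c(\mathcal{Q})^{c}=(1+\sigma_{1}+\dots+\sigma_{n-1})^{c}$, where $\sigma_{j}=c_{j}(\mathcal{Q})$; for $c\ge2$ it is a positive multiple of $[\mathrm{pt}]$, since it dominates the monomial $\sigma_{n-1}^{2}=[\mathrm{pt}]$ (there is a unique line through two general points of $\PP^{n}$). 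Hence $c_{2(n-1)}(\mathcal{G})\ne0$, and because $\rk\mathcal{G}=r-c$ this forces $r-c\ge2(n-1)$, i.e. $r\ge c+2(n-1)$. I expect the real work to lie exactly here: proving the Chern-class non-vanishing on $G(1,n)$ cleanly, and, before it, carrying out the spectral-sequence identification of $R^{1}\pi_{2*}(\pi_{1}^{*}\EE\otimes\Ii_{\mathbb{F}})$ and checking that no other Koszul term contributes to degree $1$.

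Finally, the bound is sharp: Example~\ref{minRankPn} provides, for each $c\ge2$, a $1$-type uniform Steiner bundle $\EE_{c+2(n-1)}$ on $\PP^{n}$ of rank $c+2(n-1)$ and first Chern class $-c$ with no trivial summand. Mimicking the end of the proof of Theorem~\ref{mainthm1}, for $0\le i\le(n-1)(c-2)$ I would form $0\to\EE_{c+2(n-1)}\to\EE_{c+2(n-1)+i}\to\OO_{\PP^{n}}^{i}\to0$ by adjoining to the defining matrix $i$ columns of linear forms whose classes are linearly independent in $H^{1}(\EE_{c+2(n-1)})$; since $H^{0}(\EE_{c+2(n-1)})=0$ one gets $h^{1}(\EE_{c+2(n-1)})=(n+1)c-(2c+2n-2)=(n-1)(c-2)$, so $i$ ranges over all of $[0,(n-1)(c-2)]$ and $r=c+2(n-1)+i$ over all of $[c+2(n-1),cn]$. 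Each $\EE_{c+2(n-1)+i}$ has no trivial summand (linear independence of the classes forces $H^{0}=0$) and remains $1$-type uniform, because restricting the extension to an arbitrary line $L$ and using $\Ext^{1}_{L}(\OO_{L},\OO_{L}(-1))=\Ext^{1}_{L}(\OO_{L},\OO_{L})=0$ splits it as $\EE_{c+2(n-1)+i}|_{L}\cong\OO_{L}(-1)^{c}\oplus\OO_{L}^{\,2(n-1)+i}$. Together with the two inequalities this proves the theorem.
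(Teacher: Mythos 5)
Your proposal is correct, and for the upper bound and the realisation of all ranks in $[c+2(n-1),cn]$ it follows the paper's proof essentially verbatim (Example~\ref{minRankPn} for the minimal rank, then iterated extensions by $\OO_{\PP^n}^{i}$ obtained by adjoining $i$ columns with independent classes in $H^1$, $0\le i\le (n-1)(c-2)$; you even supply the splitting-on-lines check via $\Ext^1_L(\OO_L,\OO_L(-1))=\Ext^1_L(\OO_L,\OO_L)=0$ that the paper leaves as ``we easily check''). The genuine difference is in the lower bound. The paper twists the Koszul resolution of $\OO_L$ by $\EE$, uses the Steiner vanishings to identify $H^1(\mathcal K)$ with $H^1(\EE(-1))^{n-1}$, deduces from uniformity a surjection of bundles on $G(1,n)$ whose kernel is a ``Steiner bundle on the Grassmannian'', and then invokes the external rank bound $\rk\ge\dim G=2(n-1)$ from \cite{AM}. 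You instead run the same standard construction relatively over the incidence variety: the Koszul resolution of $\Ii_{\mathbb F}$ by $\OO_{\PP^n}(-k)\boxtimes\bigwedge^k\QQ^\vee$ together with the identical Steiner vanishings produces the exact sequence $0\to q_*p^*\EE\to(\QQ^\vee)^{\oplus c}\to H^1(\EE)\otimes\OO_G\to0$, and you replace the citation by the explicit Chern-class computation $c_{2(n-1)}(q_*p^*\EE)=\bigl[c(\QQ^\vee)^c\bigr]_{2(n-1)}\ge\binom{c}{2}\sigma_{n-1}^2\ne0$, forcing $\rk(q_*p^*\EE)=r-c\ge2(n-1)$. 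The two routes encode the same idea, but yours is self-contained, and it has the added merit of pinning down the correct form of the surjection on $G(1,n)$: as displayed in \eqref{steinDualPn} the paper's sequence has $\OO_G$ and $\QQ$ in positions whose ranks ($c(n-1)$ mapping onto $(cn-r)(n-1)$) do not match the arithmetic $c(n-1)-(cn-r)\ge 2n-2$ actually used; your version $(\QQ^\vee)^{\oplus c}\twoheadrightarrow H^1(\EE)\otimes\OO_G$ is the one consistent with that inequality and with the $n=2$ case \eqref{steinDual}.
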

\begin{proof}
As before, the upper  bound $r\le cn$ immediately follows from the hypothesis that $\EE $ has no trivial summands and the equality $r=cn$ holds if and only if $\EE $ is $c$ copies of $\Omega ^1_{\PP^n}(1)$. \\
Let us now prove that $c+2(n-1)\leq r$. To do so, we consider the exact sequence obtained twisting by $\EE$ the Koszul exact sequence associated to $\OO_L$, in which we underline the first kernel from the left,
$$
\xymatrix{
0 \ar[r] & \EE(-n+1) \ar[r] & \EE(-n+2)^{n-1} \ar[r] & \cdots\\
\cdots \ar[r] & \EE(-2)^{\binom{n-1}{2}} \ar[r] & \EE(-1)^{n-1}\ar[dr] \ar[rr] & & \EE \ar[r] & \EE_L \ar[r] & 0\\
 & & & \mathcal{K} \ar[ur] \ar[dr] \\
 & & 0\ar[ur] && 0
}
$$
Also recall that by hypothesis $\EE_L\simeq \OO_L(-1)^c \oplus \OO_L^{r-c}$, for every line $L \subset \PP^n$ and notice directly that we can assume $r-c >0$ or else we would have that $\EE \simeq \OO_{\PP^n}(-1)^c$, as a consequence of \cite[Theorem 3.2.1]{OSS}. Being $\EE$ a Steiner bundle, we have that $H^1(\mathcal{K})\simeq H^1(\EE(-1)^{n-1})$; indeed, from the short exact sequence which defines $\EE$ we obtain the vanishing of the cohomology modules $H^2_*(\EE)=\cdots=H^{n-1}_*(\EE)=0$, with $H^i_*(\EE)= \bigoplus_{k \in \ZZ}H^i(\EE(k))$. The required isomorphism follows from the the definition of $\mathcal{K}$ given in the previous diagram. Moreover, by the uniformity hypothesis, we have that the map $f_L$ induced in the exact cohomology sequence
$$
H^1(\mathcal{K}) \stackrel{f_L}{\longrightarrow} H^1(\EE) \longrightarrow H^1(\EE_L) =0
$$
is surjective for any line $L\subset \PP^n$. Therefore we have a surjective morphism of vector bundles on the Grassmannian of lines $G :=G(1,n)$
\begin{equation}\label{steinDualPn}
H^1(\EE(-1)^{n-1}) \otimes \OO_{G} \longrightarrow H^1(\EE) \otimes \QQ \longrightarrow 0,
\end{equation}
where $\QQ$ denotes the universal quotient bundle of rank $n-1$ in the Grassmannian $G$, and whose kernel is a Steiner bundle, which we know must have rank bigger or equal than $2n-2=\dim G$ (see \cite{AM}). The fact that $\EE$ does not have trivial summands is equivalent to asking for $H^0(\EE)=0$, from which  we can compute $h^1(\EE) = cn -r$ and obviously $h^1(\EE(-1))=c$. Therefore we get $c(n-1) - cn +r \geq 2n-2$ or, equivalently,  $r\geq c+2(n-1)$ as we wanted.

By Example \ref{minRankPn}, we know for any value of $c\ge 2$ the existence  of  rank $c+2(n-1)$, 1-type uniform Steiner bundle $\EE_{c+2(n-1)}$ on $\PP^n$ with first Chern class $-c$. As before, the lower bound is achieved and we will construct the higher admissible rank cases as iterated extensions by trivial bundles, starting with  $\EE _{c+2(n-1)}$, as described in the
  following diagram

$$
\xymatrix{
& 0 \ar[d] & 0 \ar[d] & 0 \ar[d]\\
0\ar[r] & \EE_{c+2(n-1)}	\ar[r] \ar[d] & \OO_{\PP^2}^{2(c+n-1)} \ar[d] \ar[r]^{A_n} & \OO_{\PP^n}(1)^c \ar[d] \ar[r] & 0 \\
0\ar[r] & \EE_{c+2(n-1)+i}\ar[r] \ar[d] & \OO_{\PP^n}^{2(c+n-1)+i} \ar[d] \ar[r]^{[A_n|C]} & \OO_{\PP^n}(1)^c \ar[d] \ar[r] & 0 \\
0 \ar[r] & \OO_{\PP^n}^{i} \ar[r] \ar[d] & \OO_{\PP^n}^{i} \ar[r] \ar[d] & 0\\
& 0 & 0
}
$$
where $[A_n|C]$ is constructed from $A_n$ adding $i$ linearly independent columns of linear forms. Again we can add at most $(c-2)(n-1)$ linearly independent columns because, $h^1(\EE)_{c+2(n-1)} = (c-2)(n-1)$. So, when $i$ sweeps the interval $[0,(c-2)(n-1)]$, we get all possible values of $r\in [c+2(n-1),cn]$.
The fact that $\EE_{c+2(n-1)+i}$ is a 1-type uniform Steiner bundle on $\PP^n$ follows from the exact sequence
$$·0 \longrightarrow \EE_{c+2(n-1)} \longrightarrow \EE_{c+2(n-1)+i} \longrightarrow  \OO_{\PP^n}^{i}  \longrightarrow 0$$ and the fact that $\EE_{c+2(n-1)}$ is a 1-type uniform Steiner bundle (see Example 12).
\end{proof}

Let us now recall the geometric construction that allowed Dr\'{e}zet, in \cite{D}, to construct examples of indecomposable uniform vector bundles of rank $2n$ on $\PP^n$ which are not homogeneous. The natural generalization of such a construction will be of extreme importance for our work. Indeed, it will imply that for any $1$-type uniform bundle $\EE$ on $\PP^n$, it is possible to find a short exact sequence
$$
0 \longrightarrow \OO_{\PP^n}^{\alpha} \longrightarrow \EE^\lor \longrightarrow \FF^\lor \longrightarrow 0,
$$
with $\alpha \geq 0$ and $\FF$ a 1-type uniform bundle of minimal rank, with respect to the bound obtained in the previous result, i.e. $\rk \FF = c + 2(n-1)$.

Let $V$ an $n+1$-dimensional $\mathbb{K}$-vector space and $\PP^n = \PP(V)$ the associated projective space of lines. For any integer $p\geq 2$, take the symmetric power $S^pV$ and $H \subset S^pV$ a $\mathbb{K}$-vector subspace, recalling that the fibre of $\OO_{\PP(V)}(-p)$ on the point $x\in \PP(V)$ is the line $x^p \subset S^pV$. Define a morphism of vector bundles
$$
f_H: \OO_{\PP(V)}(-p) \longrightarrow (S^pV/H)\otimes\OO_{\PP(V)}
$$
such that $(f_H)_x(y)=y+H$, with $x\in \PP(V)$ and $y\in (\OO_{\PP(V)}(-p))_x$. We have (see \cite[Lemme 1]{D}) that $\EE(H) := \ker f_H^*$ is a 1-type uniform vector bundle on $\PP^n$ if and only if $S^pD\cap H = \{0\}$ for any plane $D \subset V$. Indeed, such a geometrical condition is equivalent to asking for the surjectivity of the map in cohomology
$$
H^1(\EE(H)(-1)^{n-1}) \longrightarrow H^1(\EE(H))
$$
which was described in the proof of Theorem \ref{mainthm2}.
Moreover, the union of all $S^pD$ of $S^pV$, varying $D$ in the Grasmmannian of planes of $V$, is a homogeneous closed subvariety of $S^pV$ of dimension $2n+p-1$.

As a natural generalization of such construction, we consider $c$ copies of the line bundle $\OO_{\PP(V)}(-p)$ and a map
$$
f_H^p: \OO_{\PP(V)}(-p)^c \longrightarrow ((S^pV)^{\oplus c}/H)\otimes\OO_{\PP(V)},
$$
with $H \subset (S^pV)^{\oplus c}$ a $\mathbb{K}$-vector subspace. As in the previous case, if we denote $\EE(H) := \ker (f_H^p)^*$, we have that $\EE(H)$  is a 1-type uniform bundle if and only if, for any plane $D \subset V$ we have an injective morphism $D^{\oplus c} \rightarrow (S^pV)^{\oplus c}/H$, which is given by the map in cohomology induced by $f_H^p$, when restricted to the line in $\PP^n$ associated to the $\mathbb{K}$-vector space $D$.

Notice that we have already proven, recall Theorem \ref{mainthm2}, that the minimal rank of $1$-type uniform bundle $\EE$ on $\PP^n$, with $c_1(\EE) = -c$, is equal to $c +2(n-1)$. This means that we have already proven that, if we consider $p=1$, we get $\dim H \leq (n+1)c -2c -2n +2$ and moreover, we have shown explicit examples for which the subspace $H$ reaches the highest possible dimension. In this case, we will denote the vector subspace by $H_{max}$.

The following result is a direct consequence of the described geometric interpretation of the 1-type uniformity joint with the existence of examples of minimal rank that we have constructed.

\begin{Theorem}\label{thmGeom}
Let $\EE$ be a 1-type uniform vector bundle on $\PP^n$, with $c_1(\EE) = -c$ and rank $r\geq c +2(n-1) $. Then, the vector bundle $\EE$ always fits into the following short exact sequence
$$
0 \longrightarrow \FF \longrightarrow \EE \longrightarrow \OO_{\PP^n}^\alpha \longrightarrow 0,
$$
with $\alpha \in \mathbb{N}$ and where $\FF$ is a 1-type uniform vector bundle with $\rk(\FF) = c+2(n-1)$, i.e. the minimum allowed.
\end{Theorem}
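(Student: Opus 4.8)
The plan is to translate the statement into the language of the geometric construction recalled above in the case $p=1$, and then to enlarge the defining subspace $H$ until it attains the maximal dimension $\dim H_{max}$. First I would realize $\EE$ in the form $\EE(H)$. Starting from a Steiner presentation $0\to\EE\to\OO_{\PP^n}^{r+c}\stackrel{A}{\to}\OO_{\PP^n}(1)^c\to 0$, the hypothesis that $\EE$ has no trivial summand says $H^0(\EE)=0$, equivalently that the linear map $H^0(A)\colon\KK^{r+c}\to H^0(\OO_{\PP^n}(1)^c)=(V^{*})^{\oplus c}$ is injective; its image $W$ has dimension $r+c$. Setting $H:=W^{\perp}\subset V^{\oplus c}$ one checks directly, using the identification $W=H^{\perp}=(V^{\oplus c}/H)^{*}$, that $A$ coincides with $f^{*}_H$, so that $\EE=\EE(H)$ with $\dim H=(n+1)c-(r+c)=nc-r$ and $\rk\EE(H)=nc-\dim H$. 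By the criterion of Dr\'ezet (\cite[Lemme 1]{D}) recalled above, the $1$-type uniformity of $\EE$ is exactly the ``good position'' property that $D^{\oplus c}\cap H=\{0\}$ for every plane $D\subset V$.

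The core of the argument is to enlarge $H$ to a subspace $H'\supseteq H$ which is still in good position and has $\dim H'=\dim H_{max}=(n-1)(c-2)=nc-(c+2(n-1))$; note $\dim H_{max}\ge\dim H$ because $r\ge c+2(n-1)$ by Theorem \ref{mainthm2}. I would proceed one dimension at a time. If the current subspace $H$ is in good position, then for $v\in V^{\oplus c}$ the subspace $H+\langle v\rangle$ is strictly larger and still in good position if and only if $v\notin\bigcup_{D\in G(2,V)}(D^{\oplus c}+H)$. Since $H$ is in good position, each $D^{\oplus c}+H$ has constant dimension $2c+\dim H$, so this ``bad locus'' is the image, under a proper projection from $G(2,V)$, of a variety of dimension $\dim G(2,V)+2c+\dim H=2(n-1)+2c+\dim H$; this is strictly smaller than $\dim V^{\oplus c}=(n+1)c$ precisely when $\dim H<(n-1)(c-2)=\dim H_{max}$. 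Hence, as long as $\dim H<\dim H_{max}$, a general $v$ works, and iterating $\alpha:=\dim H_{max}-\dim H=r-c-2(n-1)$ times produces $H\subseteq H'$ with $\dim H'=\dim H_{max}$ and $D^{\oplus c}\cap H'=\{0\}$ for all $D$. (When $c=2$ one has $\dim H_{max}=0$, which forces $r=cn$, $H=H'=\{0\}$ and $\alpha=0$, and the statement is then trivial.)

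Finally I would set $\FF:=\EE(H')$. By the same criterion $\FF$ is a $1$-type uniform vector bundle, and $\rk\FF=nc-\dim H'=c+2(n-1)$, the minimal possible rank. The inclusion $H\subseteq H'$ gives $(H')^{\perp}\subseteq H^{\perp}$ inside $(V^{*})^{\oplus c}$, and this exhibits $f^{*}_{H'}$ as the restriction of $f^{*}_H$ to the subbundle $(H')^{\perp}\otimes\OO_{\PP^n}$, so $\FF=\EE(H')\subseteq\EE(H)=\EE$. Comparing the two Steiner presentations $0\to\FF\to(H')^{\perp}\otimes\OO_{\PP^n}\to\OO_{\PP^n}(1)^c\to 0$ and $0\to\EE\to H^{\perp}\otimes\OO_{\PP^n}\to\OO_{\PP^n}(1)^c\to 0$, the snake lemma applied to the natural vertical maps (the identity on $\OO_{\PP^n}(1)^c$, and the inclusion with cokernel $(H^{\perp}/(H')^{\perp})\otimes\OO_{\PP^n}$ in the middle term) yields $0\to\FF\to\EE\to(H^{\perp}/(H')^{\perp})\otimes\OO_{\PP^n}\to 0$, that is $0\to\FF\to\EE\to\OO_{\PP^n}^{\alpha}\to 0$ with $\alpha=\dim H'-\dim H=r-c-2(n-1)\in\mathbb{N}$, as desired.

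The step I expect to be the main obstacle is the dimension count in the second paragraph: one has to check that at every intermediate stage the bad locus $\bigcup_D(D^{\oplus c}+H)$ is genuinely a proper subvariety of $V^{\oplus c}$. This relies on the fact that the current $H$ is already in good position, so that the fibres $D^{\oplus c}+H$ never drop in dimension and the locus really is the image of a projective bundle of the expected dimension, together with the observation that the numerical inequality $2(n-1)+2c+\dim H<(n+1)c$ is \emph{exactly} the inequality $\dim H<\dim H_{max}$ — which is precisely the room granted by the hypothesis $r\ge c+2(n-1)$.
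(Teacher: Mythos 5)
Your argument is correct and follows essentially the same route as the paper: realize $\EE$ as $\EE(H)$ via the Dr\'ezet-type construction with $p=1$, enlarge $H\subset V^{\oplus c}$ up to $\dim H_{max}=(n-1)(c-2)$ while preserving the condition $D^{\oplus c}\cap H=\{0\}$ for every plane $D\subset V$, and read off the sequence $0\to\FF\to\EE\to\OO_{\PP^n}^{\alpha}\to 0$ from the inclusion $(H')^{\perp}\subseteq H^{\perp}$. Your incidence-variety dimension count (the bad locus has dimension at most $2(n-1)+2c+\dim H$, which is less than $(n+1)c$ precisely when $\dim H<\dim H_{max}$) is exactly the substance behind the paper's brief assertion that there is ``still room to extend'' $H$, so you have supplied the detail the paper leaves implicit rather than taken a different path.
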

\begin{Remark}
The previous result tells us that whenever we have a 1-type uniform vector bundle $\EE$ on $\PP^n$, it always possible to eliminate the right number of linear combinations of columns of its defining matrix in order to obtain a minimal rank 1-type uniform bundle $\FF$ on $\PP^n$. This, combined with Theorem \ref{mainthm2}, means that starting from any $1$-type uniform bundle, we can add (respectively delete) columns in order to get a 1-type uniform bundle of maximal (respectively minimal) rank. Therefore we can think of any two 1-type uniform bundles as connected by ``paths'' representing addition or deletion of linear combinations of columns of the defining matrices.
\end{Remark}
\begin{proof}[Proof of Theorem \ref{thmGeom}]
The minimal rank examples introduced in Example \ref{minRankPn} tell us that $$\dim H_{max}=(n+1)c -2c -2n +2.$$ Therefore when we consider another $\mathbb{K}$-vector subspace $H\subset V^{\oplus c}$ with $\dim H < \dim H_{max}$, the geometric interpretation implies that we still have room to extend it to reach $\dim H_{max}$, maintaining the 1-type uniformity.
\end{proof}


\section{Families of uniform bundles which are not homogeneous}
Our goal is to prove now that many of the 1-type uniform bundles described in the previous section are indeed not homogeneous. Throughout this part, it will be important to underline the value of the first Chern class of the considered bundles, therefore we will specify its opposite as an index.\\ Specifically, this entire section is devoted to prove the following result
\begin{Theorem}
Let $\EE_c$ be a 1-type uniform Steiner bundle on $\PP^n$ with $c_1(\EE_c) = -c$, $c\geq 4$ and rank $c+2(n-1)$ (the minimum possible), as defined in Examples \ref{ex-1type} and \ref{minRankPn}. Then $\EE_c$ is not homogeneous.
\end{Theorem}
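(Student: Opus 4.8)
The plan is to reduce non-homogeneity of $\EE_c$ to a concrete incompatibility involving the explicit matrix $A_n$, and then to produce that incompatibility when $c\ge 4$. By Definition \ref{homogeneous}, $\EE_c$ is homogeneous exactly when $g^*\EE_c\cong\EE_c$ for every $g\in PGL(V)$. Using the geometric description recalled before Theorem \ref{thmGeom} in the case $p=1$, one writes $\EE_c=\EE(H_c)$ for the subspace $H_c\subset V\otimes C$ read off from (the annihilator of the rows of) $A_n$, and, since $\EE(H)\cong\EE(H')$ precisely when $H'=(1_V\otimes h)(H)$ for some $h\in GL(C)$, the condition $g^*\EE(H_c)\cong\EE(H_c)$ becomes the existence of $h\in GL(C)$ with $(g\otimes h)(H_c)=H_c$. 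On the matrix side, with $A_n\in\Hom(B,C)\otimes V^*$, this says that homogeneity is equivalent to the following: every linear substitution of the variables $x_0,\dots,x_n$ carries $A_n$ to a matrix obtainable from $A_n$ by row operations (coming from $GL(C)$) and column operations (coming from $GL(B)$). Hence it suffices to exhibit a single substitution that cannot be undone in this way.

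It is cleaner to argue infinitesimally. Differentiating the orbit map $g\mapsto[g^*\EE_c]$ at the identity yields $\rho\colon\mathfrak{sl}(V)\to\Ext^1(\EE_c,\EE_c)$, and since $PGL(V)$ is connected, $\EE_c$ is homogeneous if and only if $\rho=0$. Via the quiver description of Steiner bundles, $\Ext^1(\EE_c,\EE_c)=\operatorname{coker}\big(d\colon\End(B)\oplus\End(C)\to\Hom(B,C)\otimes V^*,\ (\beta,\gamma)\mapsto A_n\beta-\gamma A_n\big)$, and $\rho(\xi)$ is the class of $\xi\cdot A_n$, where $\xi\in\mathfrak{sl}(V)$ acts through the $V^*$--factor. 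So it is enough to find a traceless $\xi$ with $\xi\cdot A_n\notin\operatorname{Im} d$. I would take $\xi$ adapted to the block shape of $A_n$ — say a diagonal element with pairwise distinct weights on $x_0,\dots,x_n$ and an asymmetric weight on the ``spread-out'' variable $x_0$, or a unipotent $x_i\mapsto x_i+\varepsilon x_0$ — so that $\xi\cdot A_n$ reintroduces $x_0$ in positions where, by the very construction of $A_n$, it only occupied a controlled handful of rows; then I would produce a linear functional on $\Hom(B,C)\otimes V^*$ (morally, the coefficient of a fixed variable in a fixed entry, after the obvious normalisations) that kills $\operatorname{Im} d=\{A_n\beta-\gamma A_n\}$ but is nonzero on $\xi\cdot A_n$. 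The hypothesis $c\ge4$ enters precisely here: for $c\le 3$ the matrix $A_n$ is small enough that $\operatorname{Im} d$ fills the relevant slots, $\rho=0$, and $\EE_c$ is genuinely homogeneous (for $c=2$, the minimal rank already forces $\EE_c\cong\Omega^1_{\PP^n}(1)^{\oplus2}$), whereas for $c\ge4$ the sparse, essentially bidiagonal pattern of $A_n$ leaves a slot that no single row- or column-derivation can reach.

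The hard part is exactly this last bookkeeping: pinning down which entrywise modifications of $A_n$ lie in $\operatorname{Im} d$, keeping careful track of the interplay between the diagonal $[\,x_i\ x_{i+1}\,]$--blocks and the off-block placements of the remaining variables, and then checking that $\xi\cdot A_n$ escapes that span. As a sanity check, and an alternative (but, I expect, more delicate) route, one can instead invoke the structure theory of homogeneous bundles: the splitting type $(-1^{c},0^{r-c})$, the value $c_1(\EE_c)=-c$, the vanishing $H^0(\EE_c)=0$ and the cohomological profile forced by the Steiner resolution restrict the irreducible composition factors of a hypothetical homogeneous $\EE_c$ to $\OO_{\PP^n}$, $\Omega^1_{\PP^n}(1)$ and close relatives, and one shows that any homogeneous bundle with these invariants must split off a copy of $\Omega^1_{\PP^n}(1)$; this contradicts indecomposability of $\EE_c$, which itself follows in one line by applying the minimal-rank bound of Theorem \ref{mainthm2} to a hypothetical nontrivial direct summand. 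Controlling the non-split extensions among the admissible factors is the subtle point in that second approach, so the direct computation with $A_n$ seems the more robust line to pursue.
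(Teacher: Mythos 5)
Your proposal is a strategy outline rather than a proof: the decisive step is announced but never carried out. Everything hinges on exhibiting a specific traceless $\xi$ and a specific linear functional on $\Hom(B,C)\otimes V^*$ that annihilates $\operatorname{Im} d=\{A_n\beta-\gamma A_n\}$ yet is nonzero on $\xi\cdot A_n$; you write ``I would take $\xi$ adapted to the block shape\dots then I would produce a linear functional\dots'' and then concede that ``the hard part is exactly this last bookkeeping.'' That bookkeeping \emph{is} the theorem --- it is exactly where the hypothesis $c\ge 4$ must enter and where the sparse structure of $A_n$ must be exploited --- so as it stands the argument establishes nothing. There is also an unjustified reduction at the start: you use that $\EE_c$ is homogeneous if and only if the Kodaira--Spencer map $\rho\colon\mathfrak{sl}(V)\to\Ext^1(\EE_c,\EE_c)$ vanishes. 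The implication you actually need (``$\rho\neq 0$ implies not homogeneous,'' i.e.\ ``homogeneous implies $\rho=0$'') is not automatic for a non-simple bundle: a family over a reduced base can have all closed fibres abstractly isomorphic while its first-order deformation class at a point is nonzero, because the fibrewise isomorphisms need not glue even infinitesimally. This needs an argument (or a replacement by a criterion that is actually proved), and none is given. The secondary route via composition factors of homogeneous bundles is even more schematic and cannot be assessed.

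For contrast, the paper's proof avoids the infinitesimal picture entirely. It first pins down the minimal graded free resolution of $\EE_c$ on $\PP^2$: one shows $h^0(\EE_c(1))=c+2$ and that $h^1(\EE_c(t))$ drops by exactly $2$ with each twist (both directions of the inequality, using the surjection $H^1(\EE_c(t))\otimes\OO_{(\PP^2)^*}(-1)\to H^1(\EE_c(t+1))\otimes\OO_{(\PP^2)^*}$ coming from uniformity), which forces the resolution
$$
0 \rightarrow  \OO_{\PP^2}(b)^s \rightarrow \OO_{\PP^2}(a)^s \oplus \OO_{\PP^2}(-1)^{c+2} \rightarrow \EE_c \rightarrow 0
$$
with $s=1$, $a-b=2$ for $c$ odd and $s=2$, $a-b=1$ for $c$ even. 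An isomorphism $t^*\EE_c\cong\EE_c$ lifts to an isomorphism of minimal resolutions, which for $c$ odd would make the degree-$2$ entry $q$ of the syzygy matrix satisfy $t(q)=\lambda\mu\, q$ for every $t\in PGL(3,\KK)$ --- impossible, since no quadric is $PGL(3)$-semi-invariant. (For $c=3$ the nonlinear part of the resolution is all of $S^2 V^*$, which is why $\EE_3$ \emph{is} homogeneous.) The case of $\PP^n$ is then reduced to $\PP^2$ by restricting to $\{x_3=\dots=x_n=0\}$, where $(\EE_c)_{|\PP^2}\cong\tilde\EE_c\oplus\OO_{\PP^2}^{2n-4}$. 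If you want to salvage your approach, the honest task is to actually compute $\operatorname{coker} d$ for the explicit $A_n$ and verify non-vanishing of your class, together with a correct justification of the infinitesimal criterion; but the resolution-based argument is shorter and already complete.
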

We will divide the proof in several steps, starting from the case of the projective plane $\PP^2$. The considered bundles will all have their defining matrix as described in Example \ref{ex-1type}.
\begin{Lemma}\label{Claim1}
Let $\EE_c$ be a 1-type uniform bundle on $\PP^2$ with $c_1(\EE_c) = -c$ and rank $c+2$, then 
$$
h^1(\EE_c(t+1))\leq \max(0,h^1(\EE_c(t))-2),
$$
for every $t\geq -1$.
\end{Lemma}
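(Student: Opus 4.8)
The plan is to run, at every twist $t\ge -1$, the argument already used in the proof of Theorem \ref{mainthm1}. Fix $t\ge -1$ and, for a line $L\subset \PP^2$ cut out by a linear form $\ell_L\in H^0(\OO_{\PP^2}(1))$, consider the restriction sequence
$$
0\longrightarrow \EE_c(t)\longrightarrow \EE_c(t+1)\longrightarrow (\EE_c)_L(t+1)\longrightarrow 0 .
$$
By $1$-type uniformity we have $(\EE_c)_L\simeq \OO_L(-1)^c\oplus \OO_L^{2}$ for \emph{every} line $L$, hence $(\EE_c)_L(t+1)\simeq \OO_L(t)^c\oplus \OO_L(t+1)^{2}$. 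Since $t\ge -1$, both $\OO_L(t)$ and $\OO_L(t+1)$ have vanishing first cohomology, so $H^1((\EE_c)_L(t+1))=0$. From the long exact cohomology sequence it follows that the multiplication-by-$\ell_L$ map
$$
H^1(\EE_c(t))\longrightarrow H^1(\EE_c(t+1))
$$
is surjective for \emph{every} line $L\subset \PP^2$.

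Next I let $L$ vary over the dual plane $(\PP^2)^*$. Exactly as in (\ref{steinDual}), these surjections are the fibrewise restrictions of a morphism of vector bundles on $(\PP^2)^*$ given by linear forms,
$$
H^1(\EE_c(t))\otimes \OO_{(\PP^2)^*}(-1)\longrightarrow H^1(\EE_c(t+1))\otimes \OO_{(\PP^2)^*},
$$
and this morphism is surjective because it is surjective on every fibre. Denote its kernel by $\Kk$; it is a vector bundle on $(\PP^2)^*$ of rank $h^1(\EE_c(t))-h^1(\EE_c(t+1))$. Twisting the defining sequence of $\Kk$ by $\OO_{(\PP^2)^*}(1)$ gives
$$
0\longrightarrow \Kk(1)\longrightarrow \OO_{(\PP^2)^*}^{h^1(\EE_c(t))}\longrightarrow \OO_{(\PP^2)^*}(1)^{h^1(\EE_c(t+1))}\longrightarrow 0 ,
$$
so that $\Kk(1)$ is a Steiner bundle on $(\PP^2)^*$.

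To conclude, distinguish two cases. If $h^1(\EE_c(t+1))=0$, the desired inequality $h^1(\EE_c(t+1))\le \max(0,h^1(\EE_c(t))-2)$ is trivial. If $h^1(\EE_c(t+1))\ge 1$, then $\Kk(1)$ is a nonzero Steiner bundle, hence $\rk\Kk=\rk\Kk(1)\ge 2$ by \cite[Proposition 3.9]{DK}; since $\rk\Kk=h^1(\EE_c(t))-h^1(\EE_c(t+1))$ this yields $h^1(\EE_c(t+1))\le h^1(\EE_c(t))-2$. In both cases $h^1(\EE_c(t+1))\le \max(0,h^1(\EE_c(t))-2)$ for every $t\ge -1$, which is the claim. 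The only point requiring care is the globalization step: one must verify that the fibrewise surjections assemble into a bundle morphism over $(\PP^2)^*$ that is genuinely ``given by linear forms'', so that its twisted kernel is really a Steiner bundle and \cite[Proposition 3.9]{DK} applies. This is the same base-change / relative Euler-type construction already used to obtain (\ref{steinDual}), so it should cause no real difficulty, and the remainder is bookkeeping.
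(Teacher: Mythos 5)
Your argument is correct and is essentially the paper's own proof: the paper simply re-runs the argument from Theorem \ref{mainthm1} at the twist $t$, obtaining the surjection $H^1(\EE_c(t))\otimes\OO_{(\PP^2)^*}(-1)\to H^1(\EE_c(t+1))\otimes\OO_{(\PP^2)^*}$ whose kernel is a Steiner bundle of rank at least $2$ by \cite[Proposition 3.9]{DK}, exactly as you do. Your explicit treatment of the case $h^1(\EE_c(t+1))=0$ and of the fibrewise-to-global step is a slightly more careful write-up of the same reasoning, not a different route.
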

\begin{proof}
Recall that, in the proof of Theorem \ref{mainthm1}, we have that being 1-type uniform is equivalent to having a surjective map of vector bundles in the dual projective plane 
$$
H^1(\EE_c(t)) \otimes \OO_{(\PP^2)^*}(-1) \longrightarrow H^1(\EE_c(t+1)) \otimes \OO_{(\PP^2)^*}\longrightarrow 0
$$
for every $t\geq -1$, which defines a Steiner bundle and therefore implies that $h^1(\EE_c(t)) - h^1(\EE_c(t+1) \geq 2.$
\end{proof}
\begin{Lemma}
Let $\EE_c$ be a 1-type uniform bundle on $\PP^2$ with $c_1(\EE_c) = -c$ and rank $c+2$, then $h^0\left(\EE_c(1)\right) = c+2$.
\end{Lemma}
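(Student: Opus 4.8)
The plan is to compute $h^0(\EE_c(1))$ from the cohomology of the twisted defining sequence, which reduces it to the single unknown $h^1(\EE_c(1))$, and then to pin that unknown down by an induction on $c$ in which the explicit shape of the matrix $A$ of Example~\ref{ex-1type} enters.

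First I would tensor $0\to\EE_c\to\OO_{\PP^2}^{2c+2}\to\OO_{\PP^2}(1)^{c}\to 0$ with $\OO_{\PP^2}(1)$ and pass to cohomology. Since $H^1(\OO_{\PP^2}(1))=H^2(\OO_{\PP^2}(1))=0$ this gives
$$0\to H^0(\EE_c(1))\to H^0(\OO_{\PP^2}(1))^{2c+2}\xrightarrow{\ \psi\ }H^0(\OO_{\PP^2}(2))^{c}\to H^1(\EE_c(1))\to 0,$$
so $h^0(\EE_c(1))=3(2c+2)-6c+h^1(\EE_c(1))=h^1(\EE_c(1))+6$, i.e.\ the claim is equivalent to $h^1(\EE_c(1))=c-4$. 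Here $\psi$ is the map $(\ell_1,\dots,\ell_{2c+2})\mapsto A\cdot(\ell_i)$ on vectors of linear forms, so $H^0(\EE_c(1))$ is the space of linear syzygies of $A$. Combining this with Lemma~\ref{Claim1} applied at $t=0$ and the value $h^1(\EE_c)=2c-\rk\EE_c=c-2$ from the proof of Theorem~\ref{mainthm1}, one already gets $h^1(\EE_c(1))\le\max(0,(c-2)-2)=c-4$, hence $h^0(\EE_c(1))\le c+2$; for $c=4$ this forces $h^1(\EE_4(1))=0$ and $h^0(\EE_4(1))=6=c+2$, which will be the base of the induction.

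For $c\ge 5$ the point is that $A=A_c$ is built from $A_{c-1}$ by appending one bottom row together with two new columns, the new row carrying $[x\ y]$ in the two new columns and a single entry $t$ in each of two old columns $j_1,j_2$; inside $A_{c-1}$ each of these two columns occurs in exactly one row, because the further $t$-entry the diagonal pattern would place below it falls into the non-existent row $c$. Consequently a linear syzygy of $A_c$ is precisely a linear syzygy $\mathbf m=(\ell_1,\dots,\ell_{2c})$ of $A_{c-1}$ whose ``defect'' $t(\ell_{j_1}+\ell_{j_2})$ against the new row lies in the ideal $(x,y)$, plus a one-dimensional family of completions $(\ell_{2c+1},\ell_{2c+2})$; the obstruction is exactly the $t^2$-coefficient of the defect, i.e.\ the linear functional $\phi_c(\mathbf m)=(\text{$t$-coefficient of }\ell_{j_1})+(\text{$t$-coefficient of }\ell_{j_2})$ on $H^0(\EE_{c-1}(1))$, and therefore $h^0(\EE_c(1))=h^0(\EE_{c-1}(1))+1-\rk\phi_c$ with $\rk\phi_c\in\{0,1\}$.

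The crux, and the step I expect to be the main obstacle, is to show $\phi_c\equiv 0$ for $c\ge 5$. For this one reads off, from the unique row of $A_{c-1}$ that contains $j_1$ — which for $c\ge 5$ is a ``regular'' row $x\ell_{j_1}+y\ell_{j_1+1}+t\ell_a+t\ell_b=0$ — that the $t$-coefficient of $\ell_{j_1}$ equals $-(p_a+p_b)$, where $p_\bullet$ denotes the $x$-coefficient; here $a$ is an $x$-column of an earlier block, so $p_a=0$ by the $x^2$-coefficient of its relation, and $b$ is a $y$-column of an earlier block, so $p_b=-q_{b'}$ by the $xy$-coefficient of its relation ($b'$ the matching $x$-column, $q_\bullet$ the $y$-coefficient). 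Symmetrically the row containing $j_2$ forces the $t$-coefficient of $\ell_{j_2}$ to equal $-(q_c+q_d)$ with $q_d=0$ and $q_c=-p_{c'}$; the diagonal pattern makes $b'=c$ and $c'=b$, so $\phi_c(\mathbf m)=-(p_b+q_c)=-(p_b+q_{b'})=0$. (For $c=3,4$ the row containing $j_1$ is instead one of the two $[x\ y\ t]$-blocks, whose relevant $t$-column is a pure-$t$ column with unconstrained $x$-coefficient, so $\phi_c$ genuinely does not vanish — matching $h^0(\EE_3(1))=h^0(\EE_4(1))=6$ rather than $c+2$; the finitely many small instances within $c\ge 5$ are handled the same way, the $x^2$-, $xy$- and $y^2$-coefficients behaving identically whether the ambient block is $[x\ y]$ or $[x\ y\ t]$.) With $\phi_c\equiv0$ the recursion reads $h^0(\EE_c(1))=h^0(\EE_{c-1}(1))+1$ for $c\ge 5$, and together with $h^0(\EE_4(1))=6$ this yields $h^0(\EE_c(1))=c+2$ for all $c\ge4$ by induction.
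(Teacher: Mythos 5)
Your proposal is correct and shares the paper's overall skeleton --- an induction on $c$ that exploits how $A_c$ is built from $A_{c-1}$, together with the upper bound supplied by Lemma \ref{Claim1} --- but it fills in with genuine arguments the two steps the paper leaves implicit. The paper obtains $h^0(\EE_c(1))\le c+2$ from a snake-lemma diagram relating $\EE_c$ to $\EE_{c-1}\oplus\OO_{\PP^2}^2$ together with the inductive value $h^1(\EE_{c-1}(1))=c-5$ (your direct computation $h^0(\EE_c(1))=h^1(\EE_c(1))+6\le c+2$ is equivalent and shorter), and then simply asserts that one can ``compute directly $c+2$ linear syzygies'' of the defining matrix, relying on Macaulay2 for the small values of $c$. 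Your treatment of the lower bound is the genuinely different content: you identify $H^0(\EE_c(1))$ with the space of linear syzygies of $A_c$, show that restriction to the first $2c$ columns exhibits it as a one-dimensional extension of the kernel of an explicit linear functional $\phi_c$ on $H^0(\EE_{c-1}(1))$, and prove $\phi_c\equiv 0$ for $c\ge 5$ by chasing the $x^2$-, $xy$-, $y^2$-, $xt$- and $yt$-coefficients through the relevant rows of $A_{c-1}$ (I checked the index bookkeeping and the cancellation $r_{j_1}+r_{j_2}=q_{b'}-q_{b'}=0$; it is correct, including the degenerate rows for $c=5,6$). This yields the exact recursion $h^0(\EE_c(1))=h^0(\EE_{c-1}(1))+1$ and reduces the base case to $c=4$, where the upper bound already forces equality. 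Your aside that $\phi_3,\phi_4\neq 0$ and $h^0(\EE_3(1))=6>5$ correctly flags that the statement requires the standing hypothesis $c\ge 4$ of this section, which the lemma as written omits. In short, your route buys a computer-free, self-contained proof of the lower bound, while the paper's route buys brevity at the cost of an unproved assertion about the syzygies.
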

\begin{proof} 
For small values of $c$, for example up to 8, it is possible to prove the result by direct computation, or with Macaulay2, using the defining matrix of $\EE_c$. We will hence suppose that the result holds for the case $c-1$ and complete the proof by induction.
 
Deleting the last row of the defining matrix of the bundle $\EE_c$, we get the following commutative diagram, thanks to the Snake Lemma,
$$
\xymatrix{
& & & 0\ar[d] \\
& 0 \ar[d] & 0 \ar[d] & \OO_{\PP^2}(1) \ar[d]\\
0 \ar[r] & \EE_c \ar[r] \ar[d] & \OO^{2c+2}_{\PP^2} \ar[d] \ar[r] & \OO_{\PP^2}(1)^c \ar[d]\ar[r] & 0\\ 
0 \ar[r] & \EE_{c-1} \oplus \OO_{\PP^2}^2 \ar[r] \ar[d] & \OO^{2c+2}_{\PP^2} \ar[d] \ar[r] & \OO_{\PP^2}(1)^{c-1} \ar[d]\ar[r] & 0\\
& \OO_{\PP^2}(1) \ar[d] & 0 & 0 \\
& 0
}
$$
Notice that such diagram is constructed deleting the last row of the matrix which defines $\EE_c$, obtaining the matrix that defines $\EE_{c-1}$ plus two columns of zeros, which give us the summand $\OO_{\PP^2}^2$ in the kernel of the middle row. Hence we have that
$$
h^0(\EE_c(1)) = h^0(\EE_{c-1}(1)) + 2 \cdot h^0(\OO_{\PP^2}(1)) - h^0(\OO_{\PP^2}(2)) +  h^1(\EE_c(1)) - h^1(\EE_{c-1}(1))
$$
Due to the previous lemma and the induction hypothesis, we have that $h^1(\EE_c(1)) \leq c-4$ and $h^1(\EE_{c-1}(1)) = c-5$, which implies that $h^0(\EE_c(1)) \leq c+2$.\\
Moreover, looking at the defining matrix of $\EE_c$, it is possible to compute directly $c+2$ linear syzygies, hence $h^0(\EE_c(1)) \geq c+2$; hence, we have an equality. 
\end{proof}
We obtain that the resolution of a 1-type uniform bundle $\EE_c$ on $\PP^2$ with $c_1(\EE_c) = -c$ and rank $c+2$ is of the following type
\begin{equation}\label{resolutionMin}
0 \rightarrow \bigoplus_{j=1}^{s} \OO_{\PP^2}(b_j) \rightarrow \bigoplus_{i=1}^{s} \OO_{\PP^2}(a_i) \oplus \OO_{\PP^2}(-1)^{c+2} \rightarrow \EE_c \rightarrow 0
\end{equation}
with $a_i < -1$ and $b_j<-2$.
\begin{Lemma}
Let $\EE_c$ be a 1-type uniform bundle on $\PP^2$ with $c_1(\EE_c) = -c$ and rank $c+2$, then 
$$
h^1(\EE_c(t+1))= \max(0,h^1(\EE_c(t))-2),
$$
for every $t\geq -1$.
\end{Lemma}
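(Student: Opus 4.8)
The inequality ``$\le$'' is Lemma \ref{Claim1}, so the plan is to prove the reverse. I would first observe that Lemma \ref{Claim1} together with $h^1(\EE_c(-1))=c$ already yields $h^1(\EE_c(t))\le\max\bigl(0,c-2(t+1)\bigr)$ for all $t\ge-1$, and that conversely the equalities $h^1(\EE_c(t))=\max\bigl(0,c-2(t+1)\bigr)$ for all $t\ge-1$ give back the statement of the Lemma. Hence everything reduces to the single lower bound $h^1(\EE_c(t))\ge c-2(t+1)$ for every $t\ge-1$.

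Next I would fix the cohomology. Dualizing the defining sequence of $\EE_c$ gives $0\to\OO_{\PP^2}(-1)^c\to\OO_{\PP^2}^{2c+2}\to\EE_c^\lor\to0$; since $H^1\bigl(\OO_{\PP^2}(k)\bigr)=0$ for all $k$, twisting by $\OO_{\PP^2}(k)$ with $k\le-1$ forces $H^0\bigl(\EE_c^\lor(k)\bigr)=0$, so Serre duality gives $h^2(\EE_c(t))=h^0\bigl(\EE_c^\lor(-t-3)\bigr)=0$ for all $t\ge-2$. Thus $h^1(\EE_c(t))=h^0(\EE_c(t))-\chi(\EE_c(t))$ for $t\ge-2$, with $\chi(\EE_c(t))=(2c+2)\binom{t+2}{2}-c\binom{t+3}{2}$. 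As $\EE_c\hookrightarrow\OO_{\PP^2}^{2c+2}$ has no sections in non-positive degree, $h^0(\EE_c(t))=0$ for $t\le0$, giving $h^1(\EE_c(-1))=c$, $h^1(\EE_c(0))=c-2$, and, by the preceding Lemma and $\chi(\EE_c(1))=6$, $h^1(\EE_c(1))=(c+2)-6=c-4$. This settles the formula for $t\in\{-1,0,1\}$ and anchors an induction on $c$ (the initial values being verified directly, as for the preceding Lemma).

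For $t\ge2$ the plan is to induct on $c$ using the short exact sequence read off the snake-lemma diagram in the proof of the preceding Lemma, namely $0\to\EE_c\to\EE_{c-1}\oplus\OO_{\PP^2}^2\to\OO_{\PP^2}(1)\to0$, whose last map restricts on the $\OO_{\PP^2}^2$-summand to $(a,b)\mapsto xa+yb$ with $x,y$ the entries in the last two columns of the last row of the matrix $A$ of Example \ref{ex-1type}. Twisting by $\OO_{\PP^2}(t)$ and using $H^1\bigl(\OO_{\PP^2}(t)\bigr)=H^1\bigl(\OO_{\PP^2}(t+1)\bigr)=0$, the cohomology sequence gives $h^1(\EE_c(t))=h^1(\EE_{c-1}(t))+\delta_t$, where $\delta_t$ is the dimension of the cokernel of $H^0(\EE_{c-1}(t))\oplus H^0\bigl(\OO_{\PP^2}(t)\bigr)^2\to H^0\bigl(\OO_{\PP^2}(t+1)\bigr)$. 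Two structural facts then control $\delta_t$: the $\OO_{\PP^2}^2$-part alone has image the degree-$(t+1)$ component of the ideal $(x,y)$, of codimension $1$, so $\delta_t\in\{0,1\}$; and multiplication by the third variable $z$ carries a section of $\EE_{c-1}(t-1)$ to one of $\EE_{c-1}(t)$ compatibly with all the maps, so $\delta_{t-1}=0$ forces $\delta_t=0$ — hence $\delta_\bullet$ is the characteristic function of an initial interval $-1\le t<T_c$. Plugging in the inductive value $h^1(\EE_{c-1}(t))=\max\bigl(0,(c-1)-2(t+1)\bigr)$ and Lemma \ref{Claim1} pins $T_c\le\lfloor(c-1)/2\rfloor$, and the lower bound $h^1(\EE_c(t))\ge c-2(t+1)$ is precisely the reverse inequality $T_c\ge\lfloor(c-1)/2\rfloor$, i.e.\ $\delta_{\lfloor(c-1)/2\rfloor-1}=1$.

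The hard part will be exactly this last equality: that $z^{t+1}$ does not lie in the image of $H^0(\EE_{c-1}(t))$ modulo $(x,y)$ as long as $t\le\lfloor(c-1)/2\rfloor-1$. This is where the explicit shape of $A$ from Example \ref{ex-1type} has to enter — its last row has exactly two nonzero $z$-entries, sitting in two columns whose entries in the previous rows are an $x$ and a $y$ belonging to different blocks — so that the restricted map $\EE_{c-1}\to\OO_{\PP^2}(1)$ sends $(v_i)_i$ to $z(v_p+v_q)$ for the corresponding indices $p,q$, while reducing the relations that cut out $\EE_{c-1}$ modulo $(x,y)$ one sees that $v_p+v_q$ cannot reach $z^t$ below that range. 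Turning this block-bookkeeping into a rigorous count — equivalently, producing directly from the columns of $A$ the $\chi(\EE_c(t))+\max\bigl(0,c-2(t+1)\bigr)$ independent degree-$t$ syzygies required — is the technical core and the step I expect to be the main obstacle; one may also recast it through the resolution \eqref{resolutionMin}, since for $t\ge-1$ one has $h^1(\EE_c(t))=\dim\ker\bigl(H^2(\bigoplus_j\OO_{\PP^2}(b_j+t))\to H^2(\bigoplus_i\OO_{\PP^2}(a_i+t))\bigr)$ and the claim becomes a rank statement for the induced maps on $H^2$. Everything else is formal manipulation of the cohomology sequences above together with the arithmetic already in place.
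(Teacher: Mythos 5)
Your formal skeleton is sound: the reduction of the Lemma to the single lower bound $h^1(\EE_c(t))\ge c-2(t+1)$, the vanishing $h^2(\EE_c(t))=0$, the anchor values at $t=-1,0,1$, and the analysis of the defect $\delta_t$ attached to the sequence $0\to\EE_c\to\EE_{c-1}\oplus\OO_{\PP^2}^2\to\OO_{\PP^2}(1)\to0$ (that $\delta_t\in\{0,1\}$ and that $\delta_\bullet$ is the indicator function of an initial segment) are all correct. But the one statement that carries the actual content of the Lemma --- that $\delta_t=1$ for all $t\le\lfloor(c-1)/2\rfloor-1$, i.e.\ that the pure power of the third variable is not reached by $H^0(\EE_{c-1}(t))$ modulo $(x,y)$ in that range --- is exactly what you defer to ``block-bookkeeping'' and yourself flag as the expected main obstacle. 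As written, the proposal is a reduction of the Lemma to an unproved combinatorial claim about the matrix $A$ of Example \ref{ex-1type}, not a proof; and that claim is not easier than the Lemma, since it is equivalent to the lower bound you set out to establish.

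The paper closes this gap by a route you brush against in your final sentence but do not exploit: the resolution (\ref{resolutionMin}), already available at this point as a consequence of the two preceding lemmas, exhibits $\OO_{\PP^2}(-1)^{c+2}$ among the generators of $\EE_c$, with all remaining generators in degree $>1$ and all relations in degree $>2$. Twisting and taking global sections then gives, degree by degree, the lower bound $h^0(\EE_c(t))\ge(c+2)\binom{t+1}{2}$ (at $t=2$ this is $3c+6$, the relation terms contributing no sections yet), and since $(c+2)\binom{t+1}{2}-\chi(\EE_c(t))=c-2(t+1)$ and $h^2(\EE_c(t))=0$, this is precisely your missing lower bound on $h^1$. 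Lemma \ref{Claim1} supplies the matching upper bound, the two estimates meet, and one iterates in $t$. No induction on $c$, no snake-lemma sequence and no syzygy count on $A$ is needed: the hard work was already absorbed into the computation $h^0(\EE_c(1))=c+2$ of the preceding lemma. I would redirect your effort there rather than into the cokernel computation you describe.
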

\begin{proof}
Let us compute $h^1(\EE_c(2))$, knowing, by the resolution in \ref{resolutionMin} combined with the defining sequence
$$
0 \rightarrow \EE_c \rightarrow \OO_{\PP^2}^{2c+2} \rightarrow \OO_{\PP^2}(1)^c \rightarrow 0,
$$
that $h^1(\EE_c(1))=c-4$. 

Using again the resolution (\ref{resolutionMin}), we have that $h^0(\EE_c(2)) \geq 3c+6$; moreover, by Lemma \ref{Claim1} and the defining sequence we get that $h^0(\EE_c(2))\leq 3c+6$, giving us the two equalities $h^0(\EE_c(2))= 3c+6$ and $h^1(\EE_c(2))= c-6$. The statement is proven iterating this technique.
\end{proof}
Combining the previous results we obtain the following one.
\begin{Proposition}\label{prop-resE}
Let $\EE_c$ be a 1-type uniform bundle on $\PP^2$ with $c_1(\EE_c) = -c$ and rank $c+2$, then
\begin{itemize}
\item if $c$ is odd, then $$
h^1\left(\EE_c\left(\frac{c-3}{2}\right)\right)=1 \mbox{ and } h^1(\EE_c(t))=0 \mbox{ for } t > \frac{c-3}{2},
$$ 
moreover, its resolution is given by the short exact sequence
$$
0 \rightarrow  \OO_{\PP^2}\left(-\frac{c-3}{2}-3\right) \rightarrow \OO_{\PP^2}\left(-\frac{c-3}{2}-1\right) \oplus \OO_{\PP^2}(-1)^{c+2} \rightarrow \EE_c \rightarrow 0
$$
\item if $c$ is even, then $$
h^1\left(\EE_c\left(\frac{c-4}{2}\right)\right)=2 \mbox{ and } h^1(\EE_c(t))=0 \mbox{ for } t > \frac{c-4}{2},
$$ 
moreover, its resolution is given by the short exact sequence
$$
0 \rightarrow  \OO_{\PP^2}\left(-\frac{c-4}{2}-3\right)^2 \rightarrow \OO_{\PP^2}\left(-\frac{c-4}{2}-2\right)^2 \oplus \OO_{\PP^2}(-1)^{c+2} \rightarrow \EE_c \rightarrow 0
$$
\end{itemize}
\end{Proposition}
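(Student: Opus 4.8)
The statement has two parts — the cohomology vanishings and the two resolutions — and the plan is to treat them in turn. For the cohomology: from the defining sequence twisted by $t$, namely $0\to\EE_c(t)\to\OO_{\PP^2}(t)^{2c+2}\to\OO_{\PP^2}(t+1)^c\to 0$, and the vanishing $H^1(\OO_{\PP^2}(k))=0$, one reads off at once $h^1(\EE_c(-1))=c$, $h^1(\EE_c(t))=0$ for $t\le -2$, and $h^2(\EE_c(t))=0$ for $t\ge -2$. Plugging $h^1(\EE_c(-1))=c$ into the equality $h^1(\EE_c(t+1))=\max(0,h^1(\EE_c(t))-2)$ proved in the preceding lemma yields
$$h^1(\EE_c(t))=\max(0,c-2t-2)\qquad(t\ge -1).$$
If $c$ is odd this is $c,c-2,\dots,3,1,0,0,\dots$, so the last non-zero value is $h^1\!\big(\EE_c(\tfrac{c-3}{2})\big)=1$; if $c$ is even it is $c,c-2,\dots,4,2,0,0,\dots$, so the last non-zero value is $h^1\!\big(\EE_c(\tfrac{c-4}{2})\big)=2$. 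This is exactly the first assertion of each item.

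For the resolution I would start from $(\ref{resolutionMin})$, written as $0\to F_1\to F_0\to\EE_c\to 0$ with $F_0=\OO_{\PP^2}(-1)^{c+2}\oplus\bigoplus_i\OO_{\PP^2}(-m_i)$ ($m_i\ge 2$) and $F_1=\bigoplus_j\OO_{\PP^2}(-n_j)$ ($n_j\ge 3$). Twisting by $t$ and using again that $H^1$ of a line bundle on $\PP^2$ vanishes gives, for $t\ge -1$,
$$h^1(\EE_c(t))=\sum_j h^2\big(\OO_{\PP^2}(t-n_j)\big)-\sum_i h^2\big(\OO_{\PP^2}(t-m_i)\big)=\sum_j\binom{n_j-1-t}{2}-\sum_i\binom{m_i-1-t}{2},$$
the summand $\OO_{\PP^2}(-1)^{c+2}$ contributing nothing since $h^2(\OO_{\PP^2}(t-1))=0$ for $t\ge -1$. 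Comparing this degree by degree with the values $\max(0,c-2t-2)$ computed above forces $\#\{i:m_i=d\}=\#\{j:n_j=d\}$ for every $d\notin\{\tfrac{c-1}{2},\tfrac{c+3}{2}\}$, with one extra generator at $d=\tfrac{c-1}{2}$ and one extra syzygy at $d=\tfrac{c+3}{2}$ when $c$ is odd (and $\#\{i:m_i=d\}=\#\{j:n_j=d\}$ off $\{\tfrac{c}{2},\tfrac{c+2}{2}\}$, with two extra of each there, when $c$ is even). On the other hand the Castelnuovo--Mumford regularity of $\EE_c$ is $\tfrac{c+1}{2}$ (resp. $\tfrac{c}{2}$) — forced by $H^1(\EE_c(t))=0$ for $t\ge\tfrac{c-1}{2}$ (resp. $t\ge\tfrac{c-2}{2}$) together with $H^2(\EE_c(t))=0$ for $t\ge -2$ — so $m_i\le\tfrac{c+1}{2}$ and $n_j\le\tfrac{c+3}{2}$ (resp. $\tfrac{c}{2}$ and $\tfrac{c+2}{2}$). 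Since $\tfrac{c+1}{2}<\tfrac{c+3}{2}$, the top allowed degree is attained only by an $n_j$; combined with the counts this leaves exactly $F_1=\OO_{\PP^2}(-\tfrac{c+3}{2})$, $F_0=\OO_{\PP^2}(-1)^{c+2}\oplus\OO_{\PP^2}(-\tfrac{c-1}{2})$ for $c$ odd, and $F_1=\OO_{\PP^2}(-\tfrac{c+2}{2})^2$, $F_0=\OO_{\PP^2}(-1)^{c+2}\oplus\OO_{\PP^2}(-\tfrac{c}{2})^2$ for $c$ even, i.e. the two sequences in the statement — provided $F_0$ and $F_1$ share no common line bundle summand $\OO_{\PP^2}(-e)$ in an intermediate degree.

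The one genuinely delicate point is ruling out such ``phantom'' cancelling pairs, which the numerics cannot detect. My plan is to obtain it from minimality of the resolution together with the structural fact behind Lemma~$\ref{Claim1}$: the module $H^1_*(\EE_c)=\bigoplus_t H^1(\EE_c(t))$ is generated in its bottom degree $t=-1$ and every linear form acts surjectively on it in each degree (this is the $1$-type uniformity). Dualizing the sequence $0\to\OO_{\PP^2}(-1)^c\to\OO_{\PP^2}^{2c+2}\to\EE_c^{\lor}\to 0$ obtained from $0\to\EE_c\to\OO_{\PP^2}^{2c+2}\to\OO_{\PP^2}(1)^c\to 0$ identifies $H^1_*(\EE_c)$ with $\operatorname{coker}\big(S^{2c+2}\to S(1)^c\big)$, $S=\KK[x_0,x_1,x_2]$, a finite-length module; its minimal free resolution therefore begins with $S(1)^c$ and $S^{2c+2}$, has length three, and is self-dual up to the twist $\omega_S=S(-3)$, so its two remaining free modules are $F_0$ and $F_1$ up to duality. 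The rigidity of a finite-length module whose Hilbert function is prescribed and on which every linear form acts surjectively then pins these down to the minimal possibilities listed above, and reading off the twists finishes the proof. I expect the clean justification of that rigidity — equivalently, that $H^1_*(\EE_c)$ has no socle below its top degree — to be the real work; all the rest is the degree bookkeeping sketched above.
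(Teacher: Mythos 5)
Your computation of the cohomology is correct and follows exactly the paper's route: $h^1(\EE_c(-1))=c$ from the defining sequence together with the equality $h^1(\EE_c(t+1))=\max(0,h^1(\EE_c(t))-2)$ of the preceding lemma gives $h^1(\EE_c(t))=\max(0,c-2t-2)$, whose last nonzero value is $1$ at $t=\tfrac{c-3}{2}$ for $c$ odd and $2$ at $t=\tfrac{c-4}{2}$ for $c$ even. Your degree-by-degree comparison of $\sum_j\binom{n_j-1-t}{2}-\sum_i\binom{m_i-1-t}{2}$ with these values is also correct (taking successive differences in $t$ determines $\#\{j:n_j=d\}-\#\{i:m_i=d\}$ for every $d\ge 2$), and together with the regularity bound it yields the two displayed resolutions \emph{up to} adding a common summand $\OO_{\PP^2}(-e)$ to both free modules.

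The genuine gap is the one you flag yourself: ruling out such cancelling pairs. Two remarks. First, the rigidity principle you propose to invoke --- that a finite-length module with prescribed Hilbert function on which every linear form acts surjectively has determined graded Betti numbers --- is not true in that generality: surjectivity of each $\cdot\ell\colon Q_t\to Q_{t+1}$ only forces each kernel to have dimension $\dim Q_t-\dim Q_{t+1}=2$, and nothing in the numerics prevents all these kernels from sharing a common line, i.e.\ a socle element of $Q=H^1_*(\EE_c)$ below the top degree, which by the duality you describe is precisely an extra summand of $F_1$ together with a matching extra generator of $F_0$. So the socle concentration has to be extracted from the explicit matrices of Example 9 (these lemmas are really about that family, as the section announces), not from uniformity and the Hilbert function alone. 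Second, you should know that the paper's own proof is no more complete on this point: it disposes of the resolutions with the single sentence that they are ``obtained taking into consideration the rank and the Chern classes of $\EE_c$'', which likewise only pins down the alternating sums of Betti numbers. To actually close the argument one should either exhibit the single quadric generator and cubic syzygy (resp.\ the two in the even case) directly from the matrix $A$, for instance by the same induction on $c$ used in the lemma computing $h^0(\EE_c(1))$, or prove that the multiplication maps $V\otimes H^0(\EE_c(t))\to H^0(\EE_c(t+1))$ have the ranks predicted by the claimed resolution.
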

\begin{proof}
The cohomology dimensions follow from the previous Lemmas. The resolutions are obtained taking into consideration the rank and the Chern classes of $\EE_c$.
\end{proof}
Let us conclude this section proving that, if $c\geq 4$, our families of uniform bundles are indeed non-homogeneous.\\
Take $c$ odd and suppose that $\EE_c$ is homogeneous, i.e. for any $t \in \PP GL(3,\KK)$ we have 
$t^* \EE_c \simeq \EE_c$. This isomorphism extends to an isomorphism of the resolution defining the two bundles
$$
\xymatrix{
0 \ar[r] & \OO_{\PP^2}\left(-\frac{c-3}{2}-3\right) \ar[d]_{\lambda} \ar[r]^(.38){\alpha} &  \OO_{\PP^2}\left(-\frac{c-3}{2}-1\right) \oplus \OO_{\PP^2}(-1)^{c+2} \ar[r] \ar[d]^{\left[\begin{array}{cc} \mu & * \\ 0 & * \end{array}\right]} & \EE_c \ar[r] \ar[d]^\simeq & 0\\
0 \ar[r] & \OO_{\PP^2}\left(-\frac{c-3}{2}-3\right) \ar[r]^(.38){t(\alpha)} &  \OO_{\PP^2}\left(-\frac{c-3}{2}-1\right) \oplus \OO_{\PP^2}(-1)^{c+2} \ar[r] & t^* \EE_c \ar[r] & 0
}
$$ 
where we denote by $t(\alpha)$ the change of coordinates applied to the matrix defining the bundle map, and $\lambda, \mu \in \KK$. Denoting by $q$ the degree 2 homogeneous form in the first entry of the matrix representing $\alpha$ and by $t(q)$ the corresponding form after the change of coordinates, we would have that for any change of coordinates $t$, we would be able to find $\lambda$ and $\mu$ in the field $\KK$ such that $t(q) = \mu \cdot q \cdot \lambda$, which is clearly impossible. The even case is analogous and this proves the result for the case of the projective plane $\PP^2$.
\begin{Remark}
Let us discuss the case $c=3$, which we have excluded because it indeed defines a homogeneous bundle.\\
By Proposition \ref{prop-resE}, we have that a resolution of $\EE_3$ is given by
$$
0 \rightarrow \OO_{\PP^2}(-3) \stackrel{M}{\rightarrow} \OO_{\PP^2}(-1)^6 \rightarrow \EE_3 \rightarrow 0
$$
which implies that we have
$$
M^T = \left[q_1 \: q_2 \: q_3 \: q_4 \: q_5 \: q_6 \right] 
$$
where the $q_i$'s denotes a basis for the vector space of homogeneous polinomials of degree two in three variables. For any change of coordinated $t$, it is then possible to recover $t^*(M)$ with linear combinations of the rows of $M$, i.e. we have a commutative diagram 
$$
\xymatrix{
0 \ar[r] & \OO_{\PP^2}(-3) \ar[d]_{\lambda} \ar[r]^(.5){\alpha} &  \OO_{\PP^2}(-1)^{6} \ar[r] \ar[d]^\mu & \EE_3 \ar[r] \ar[d]^\simeq & 0\\
0 \ar[r] & \OO_{\PP^2}(-3) \ar[r]^(.5){t(\alpha)} &   \OO_{\PP^2}(-1)^{6} \ar[r] & t^* \EE_3 \ar[r] & 0
}
$$ 
with $\mu \in GL(6,\KK)$ representing such linear combination of rows, and therefore $\EE_3$ is homogeneous.
\end{Remark}
Finally, let us observe that if we suppose that $\EE_c$ is a homogeneous, 1-type uniform vector bundle on $\PP^n$, with $c_1(\EE_c)=-c$ and rank $c+2(n-1)$, as defined in Example \ref{minRankPn}, its restriction on the projective plane $\PP^2$ given by the equations $\{x_3=\cdots=x_n=0\}$ gives us
$$
(\EE_c)_{|\PP^2} \simeq \tilde{\EE}_c \oplus \OO_{\PP^2}^{2n-4},
$$
where $\tilde{\EE}_c$ a homogeneous, 1-type uniform vector bundle on $\PP^2$, with $c_1(\tilde{\EE}_c)=-c$ and rank $c+2$, as defined in Example \ref{ex-1type}. We have previosuly shown that the bundle $\tilde{\EE}_c$ are not homogeneous for $c\geq 4$, and this concludes the proof of the result stated at the beginning of this section.

\section{The $k$-type case}
In this section, we consider the higher $k$-type uniform cases.
Let us start noticing that the technique used to construct $1$-type uniform Steiner bundles on $\PP^2$  generalizes and allows us to build $k$-type uniform Steiner bundles on $\PP^2$. This will lead to a very interesting remark on the moduli space of such bundles.
\begin{Example}\label{minkRankPn}
Let us consider the direct sum of symmetric powers, up to $k$, of the cotangent bundle on $\PP^2$ tensored by $\OO_{\PP^2}(1)$, i.e.
$$
\EE \simeq \bigoplus_{j=1}^k \left(S^j\left(\Omega_{\PP^2}(1)\right)\right)^{\alpha_j}
$$
where we have chosen $j\geq 1$ to avoid trivial summands. Obviously, $\EE$ is a $k$-type uniform Steiner  bundle on $\PP^2$. Looking at the defining matrix $B_j$ of $S^j\left(\Omega_{\PP^2}(1)\right)$ as a Steiner bundle, we observe that it can be represented to have always three columns, each one containing only, respectively, the variable $x, y$ and $t$. Moreover, the three variables are also on different rows. For instance, in
$$
B_3=
 \left[
\begin{array}{cccccccccccccc}
x & y & t & 0 & 0 & 0 & 0 & 0 & 0 & 0\\
0 & x & 0 & y & t & 0 & 0 & 0 & 0 & 0\\
0 & 0 & x & 0 & y & t & 0 & 0 & 0 & 0\\
0 & 0 & 0 & x & 0 & 0 & y & t & 0 & 0\\
0 & 0 & 0 & 0 & x & 0 & 0 & y & t & 0\\
0 & 0 & 0 & 0 & 0 & x & 0 & 0 & y & t
\end{array}
\right]
$$
in the 1st column (resp. 7th column, 10th column) only the variable $x$ appears (resp. $y$, $t$) and they appear in rows 1, 4 and 6, respectively.

Therefore, it is possible to generalize the technique, presented in Section \ref{sec-1type}, in order to produce new examples of $k$-type uniform Steiner bundles on $\PP^2$. If $\sum_{j=1}^k \alpha_j := \alpha \geq 3$, then the defining matrix of $\EE$  consists of, at least three, diagonal blocks of type $B_i$, and we have exactly $\alpha_j$ blocks of type $B_j$ for any $j=1,\ldots,k$.
Now, starting from the third block, henceforth we can do it for $\alpha - 2$ blocks, we can erase the column which contains only one variable,  for example the variable $t$, and add this variable, on the same line it was on the deleted column, to the two previous blocks. In the first one, add it in the column related to one of the remaining variables, let us say $x$, and in the second block we will add it in the column related to the remaining variable, let us say $y$. Arguing as in Example \ref{ex-1type} we prove that the constructed bundles are indeed $k$-type uniform Steiner bundles on $\PP^2$ which are not homogeneous.

\vskip 2mm
Let us look at the following specific example which will clarify the process we have just explained. We consider the vector bundle $$\EE \simeq S^3\left(\Omega_{\PP^2}(1)\right) \oplus \left(S^2\left(\Omega_{\PP^2}(1)\right)\right)^2 \oplus \Omega_{\PP^2}(1)$$
on $\PP^2$ defined by a short exact sequence of type
$$
0 \longrightarrow \EE \longrightarrow \OO_{\PP^2}^{25} \stackrel{A}{\longrightarrow} \OO_{\PP^2}(1) ^{13} \longrightarrow 0,
$$
where we represent by blocks the defining matrix:
$$
A =
\left[
\begin{array}{c|c|c|c}
B_3 & 0 & 0 & 0 \\
\hline 0 & B_2 & 0 & 0 \\
\hline 0 & 0 & B_2 & 0 \\
\hline 0 & 0 & 0 & B_1
\end{array}
\right].
$$
Notice that $\EE$ is a rank 12, 3-type uniform Steiner bundle on $\PP^2$ with $c_1(\EE )=-13$ and splitting type  $(-3,-2^3,-1^4,0^4)$ on any line $L\subset \PP^2$.
We choose and delete two columns containing only the variable $t$ and we follow the algorithm  explained before to add the variable $t$ in the suitable entries of the matrix. For instance, we  can choose the last column of $B_1$ and the last column of the second block $B_2$. So, in the original matrix $A$ we delete the columns 22 and 25. Now, we add the variable $t$ in the row 12 (resp. 11) and in the columns 11 and 20 (resp. 1 and 14) and we get  the matrix
$$
A'=
\left[
\begin{array}{ccccccccccccccccccccccccccccccccccccccccccccccccccccccccccccccccccccccccccccccccc}
x & y & t & 0 & 0 & 0 & 0 & 0 & 0 & 0 & 0 & 0 & 0 & 0 & 0 & 0 & 0 & 0 & 0 & 0 & 0 & 0 & 0 \\
0 & x & 0 & y & t & 0 & 0 & 0 & 0 & 0 & 0 & 0 & 0 & 0 & 0 & 0 & 0 & 0 & 0 & 0 & 0 & 0 & 0 \\
0 & 0 & x & 0 & y & t & 0 & 0 & 0 & 0 & 0 & 0 & 0 & 0 & 0 & 0 & 0 & 0 & 0 & 0 & 0 & 0 & 0 \\
0 & 0 & 0 & x & 0 & 0 & y & t & 0 & 0 & 0 & 0 & 0 & 0 & 0 & 0 & 0 & 0 & 0 & 0 & 0 & 0 & 0 \\
0 & 0 & 0 & 0 & x & 0 & 0 & y & t & 0 & 0 & 0 & 0 & 0 & 0 & 0 & 0 & 0 & 0 & 0 & 0 & 0 & 0 \\
0 & 0 & 0 & 0 & 0 & x & 0 & 0 & y & t & 0 & 0 & 0 & 0 & 0 & 0 & 0 & 0 & 0 & 0 & 0 & 0 & 0 \\
0 & 0 & 0 & 0 & 0 & 0 & 0 & 0 & 0 & 0 & x & y & t & 0 & 0 & 0 & 0 & 0 & 0 & 0 & 0 & 0 & 0 \\
0 & 0 & 0 & 0 & 0 & 0 & 0 & 0 & 0 & 0 & 0 & x & 0 & y & t & 0 & 0 & 0 & 0 & 0 & 0 & 0 & 0 \\
0 & 0 & 0 & 0 & 0 & 0 & 0 & 0 & 0 & 0 & 0 & 0 & x & 0 & y & t & 0 & 0 & 0 & 0 & 0 & 0 & 0 \\
0 & 0 & 0 & 0 & 0 & 0 & 0 & 0 & 0 & 0 & 0 & 0 & 0 & 0 & 0 & 0 & x & y & t & 0 & 0 & 0 & 0 \\
0 & 0 & 0 & 0 & 0 & 0 & 0 & 0 & 0 & 0 & 0 & 0 & 0 & 0 & 0 & 0 & 0 & x & 0 & y & t & 0 & 0 \\
t & 0 & 0 & 0 & 0 & 0 & 0 & 0 & 0 & 0 & 0 & 0 & 0 & t & 0 & 0 & 0 & 0 & x & 0 & y & 0 & 0 \\
0 & 0 & 0 & 0 & 0 & 0 & 0 & 0 & 0 & 0 & t & 0 & 0 & 0 & 0 & 0 & 0 & 0 & 0 & t & 0 & x & y
\end{array}
\right]
$$
The Steiner bundle $\EE'$ defined as the kernel of the map represented by $A'$ is a 3-type uniform bundle of rank 10 and $c_1(\EE')=-13$. Indeed, it is possible to compute by hand that considering for example each possible line in three different steps $t =\alpha x + \beta y$, $x = \alpha y$ and $y=0$, with $\alpha, \beta \in \KK$, the splitting is of type $(-3,-2,-2,-2,-1,-1,-1,-1,0,0)$.
\end{Example}

As a direct consequence of the families introduced in the Examples \ref{minRankPn} and \ref{minkRankPn} we have the following result.

\begin{Proposition}
There exist irreducible families of Steiner bundles $\EE$ on $\PP^n$ with fixed rank $r$ and first Chern class $c_1(\EE)=-c$ which contain uniform Steiner bundles of different $k$-type.
\end{Proposition}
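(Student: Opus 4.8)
The plan is to realize the required family as an open subset of the affine parameter space of matrices presenting Steiner bundles with fixed rank and first Chern class, and then to locate inside it two uniform members of genuinely different $k$-type, using the explicit families already at hand.

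First I would fix convenient invariants, say $n=2$, $c=5$ and $r=7$. Every Steiner bundle $\EE$ on $\PP^2$ with $\rk(\EE)=r$ and $c_1(\EE)=-c$ is the kernel of a surjective morphism $\OO_{\PP^2}^{r+c}\to\OO_{\PP^2}(1)^{c}$, hence is encoded by a $c\times(r+c)$ matrix of linear forms on $\PP^2$; the space $W$ of all such matrices is an affine space, in particular an irreducible variety. Inside $W$ the subset $W^{\circ}$ of matrices whose associated sheaf map is surjective---equivalently, has rank $c$ at every point of $\PP^2$---is open, and a nonempty open subset of an irreducible variety is irreducible; so, once $W^{\circ}$ is seen to be nonempty, the Steiner bundles it parametrizes form an irreducible family, every one of which has rank $r$ and first Chern class $-c$.

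Next I would place two uniform bundles of different $k$-type inside $W^{\circ}$. Example~\ref{ex-1type} (the $n=2$ case of the family of Example~\ref{minRankPn}), in the case $c=5$, provides a $1$-type uniform Steiner bundle $\EE_1$ on $\PP^2$ with $c_1(\EE_1)=-5$, $\rk(\EE_1)=7$ and splitting type $(-1^{5},0^{2})$ on every line, given by an explicit $5\times 12$ matrix. On the other hand, the bundle
$$
\EE_2:=S^2\!\left(\Omega^1_{\PP^2}(1)\right)\ \oplus\ \Omega^1_{\PP^2}(1)^{\oplus 2}
$$
is one of the direct sums of twisted symmetric powers of the cotangent bundle considered at the start of Example~\ref{minkRankPn}, hence is a $2$-type uniform Steiner bundle; taking the direct sum of the sequence $0\to S^2(\Omega^1_{\PP^2}(1))\to\OO_{\PP^2}^{6}\to\OO_{\PP^2}(1)^{3}\to 0$ with two copies of the twisted Euler sequence $0\to\Omega^1_{\PP^2}(1)\to\OO_{\PP^2}^{3}\to\OO_{\PP^2}(1)\to 0$ gives $0\to\EE_2\to\OO_{\PP^2}^{12}\to\OO_{\PP^2}(1)^{5}\to 0$, so $\rk(\EE_2)=7$, $c_1(\EE_2)=-5$, and $\EE_2$ has splitting type $(-2,-1^{3},0^{3})$ on every line. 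Both $\EE_1$ and $\EE_2$ are kernels of surjective morphisms $\OO_{\PP^2}^{12}\to\OO_{\PP^2}(1)^{5}$, so they are points of $W^{\circ}$ (in particular $W^{\circ}\neq\emptyset$), and they are non-isomorphic since their splitting types differ. Hence the irreducible family carried by $W^{\circ}$ contains uniform Steiner bundles of $1$-type and of $2$-type, which is the assertion.

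The one step calling for real care is the combinatorial choice just made: one must produce a pair $(r,c)$ that is simultaneously an admissible rank/first-Chern datum for a $1$-type and for a higher-type uniform Steiner bundle, which is exactly where the admissible-rank Theorems~\ref{mainthm1} and~\ref{mainthm2} and the explicit families of Examples~\ref{ex-1type}, \ref{minRankPn} and~\ref{minkRankPn} enter; one must also check $W^{\circ}\neq\emptyset$, so that irreducibility of $W$ descends to $W^{\circ}$. Both points are witnessed by the explicit matrices already written down, so apart from this the argument is formal. The same scheme works on $\PP^n$ for every $n\ge 2$: with $c$ large and $r=nc-\binom{n+1}{2}$, one compares a rank-$r$ $1$-type uniform bundle furnished by Example~\ref{minRankPn} together with Theorem~\ref{mainthm2} with the $2$-type uniform bundle $S^2(\Omega^1_{\PP^n}(1))\oplus\Omega^1_{\PP^n}(1)^{\oplus(c-n-1)}$, which has the same rank and first Chern class.
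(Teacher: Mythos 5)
Your proposal is correct and follows essentially the route the paper intends: the paper states the proposition as a direct consequence of Examples \ref{minRankPn} and \ref{minkRankPn}, the implicit argument being exactly yours, namely that Steiner bundles with fixed $(r,c)$ are parametrized by an irreducible (open subset of an affine) space of $c\times(r+c)$ matrices of linear forms, which contains the explicitly constructed uniform members of different $k$-type. Your contribution is simply to make the numerical matching explicit (e.g.\ $n=2$, $c=5$, $r=7$, comparing the rank $c+2$ bundle of Example \ref{ex-1type} with $S^2(\Omega^1_{\PP^2}(1))\oplus\Omega^1_{\PP^2}(1)^{\oplus 2}$), which the paper leaves to the reader; the details check out.
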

Moreover, as noticed in Section \ref{sec-1type}, it is possible to connect $k$-type uniform bundles (in this case varying the $k$ in the steps) by paths that represent the addition or deletion of linear combination of columns of the defining matrices.

Our next goal is to establish upper and lower bounds for the rank of a $k$-type uniform Steiner bundle $\EE$ on $\PP^2$ with fixed first Chern class.  Differently from the case $k=1$, we will study separately upper and lower bounds. First, we will discuss a lower bound for the rank.
\begin{Theorem}
Let $\EE$ be a rank $r$, $k$-type uniform Steiner  bundle on $\PP^2$ with $c_1(\EE)=-c$, then $$ r \geq \frac{c+2}{k}. $$
\end{Theorem}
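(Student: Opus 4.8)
The goal is a lower bound $r \geq (c+2)/k$ for a $k$-type uniform Steiner bundle $\EE$ on $\PP^2$ with $c_1(\EE) = -c$. The strategy I would follow mimics the lower-bound argument in the proof of Theorem \ref{mainthm1}, using the restriction exact sequence $0 \to \EE(-1) \to \EE \to \EE_L \to 0$ for a general line $L \subset \PP^2$, together with the uniformity hypothesis to get surjectivity on $H^1$. The new ingredient is that, for a $k$-type splitting $(-k^{a_k}, \ldots, -1^{a_1}, 0^{a_0})$, the restriction $\EE_L$ has $h^1(\EE_L) = \sum_{i=1}^{k} (i-1) a_i$, whereas $h^1(\EE_L(-1)) = \sum_{i=1}^{k} i\, a_i$, and more generally iterating the restriction (equivalently, twisting down) lets one control the whole function $t \mapsto h^1(\EE(t))$.

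First I would record that, exactly as in Theorem \ref{mainthm1}, uniformity forces the map $H^1(\EE(t)) \to H^1(\EE(t+1))$ to be surjective for every line and every $t$, giving a surjection of vector bundles $H^1(\EE(t)) \otimes \OO_{(\PP^2)^*}(-1) \twoheadrightarrow H^1(\EE(t+1)) \otimes \OO_{(\PP^2)^*}$ whose kernel is a Steiner bundle (hence of rank $\geq 2$ by \cite[Proposition 3.9]{DK}); this yields $h^1(\EE(t)) - h^1(\EE(t+1)) \geq 2$ whenever $h^1(\EE(t+1)) > 0$ — i.e., $h^1$ drops by at least $2$ at each step until it hits zero. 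Next I would pin down the two endpoints of this descent. On one side, since $\EE$ is Steiner with $c_1 = -c$ and rank $r$ sitting in $0 \to \EE \to \OO^{r+c} \to \OO(1)^c \to 0$, one computes $h^1(\EE) = h^0(\OO_{\PP^2}(1)^c) - h^0(\OO_{\PP^2}^{\,r+c}) + h^0(\EE(1))\text{-terms}$; more usefully, a direct Euler-characteristic / cohomology count using the defining sequence gives $h^1(\EE(t))$ for $t \ll 0$ and in particular an explicit value $h^1(\EE(-1)) = c$ (and $h^1(\EE(t)) = 0$ for $t$ sufficiently negative is false — rather $h^1$ grows). The cleaner route: evaluate $\chi(\EE_L)$ and $\chi(\EE_L(-j))$ to get that the $k$-type splitting means $\EE$ must "support'' $h^1$ down through $k$ consecutive twists before the Steiner kernel on each level is forced to have rank $\geq 2$; summing the inequality $h^1(\EE(t)) - h^1(\EE(t+1)) \geq 2$ over the range of twists $t = -1, \ldots$ where the relevant $H^1$ is nonzero, and comparing with the top value $h^1(\EE(-k+1))$ or $h^1(\EE(-1))$ dictated by the splitting type, produces the bound.

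Concretely I would argue as follows. Because the generic splitting type has a summand $\OO_L(-k)$ (the hypothesis $a_k > 0$), the bundle $\EE(k-1)$ restricted to a general line still has a negative summand $\OO_L(-1)$, so $h^1(\EE_L(k-2)) \neq 0$ in an appropriate range; chaining the "$h^1$ drops by $\geq 2$'' inequality from $t = k-2$ down to $t=-1$ — that is $k$ steps — forces $h^1(\EE(-1)) \geq 2k$ plus the residual $h^1$ at the top. But from the defining Steiner sequence twisted by $-1$, $h^1(\EE(-1)) = h^0(\OO_{\PP^2}) \cdot c - (r+c)\cdot h^0(\OO_{\PP^2}(-1)) + \cdots$; the clean computation gives $h^1(\EE(-1)) = c$ only when $k=1$, and in general one gets a relation of the form $h^1$ at the relevant twist equals an explicit linear expression in $c$ and $r$. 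Equating the combinatorial lower bound $\geq 2k$-ish coming from the $k$-fold descent with the algebraic value, and solving for $r$, should yield $kr \geq c + 2$, i.e. $r \geq (c+2)/k$. I expect the main obstacle to be bookkeeping the precise twist at which each Steiner kernel appears and verifying that its rank is genuinely $\geq 2$ at every one of the $k$ levels (rather than collapsing earlier) — i.e., making rigorous that the $k$-type hypothesis really forces $k$ full steps of the "$-2$'' descent — and correctly computing $h^1(\EE(t))$ from the defining sequence at the boundary twist to close the inequality.
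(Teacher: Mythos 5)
Your starting point -- uniformity gives a surjection of $H^1$'s over the dual plane whose kernel is a Steiner bundle of rank $\ge 2$ -- is the right tool, but you apply it at the wrong twists and then close with an inequality that cannot yield the statement. The surjectivity of the multiplication map $H^1(\EE(t)) \to H^1(\EE(t+1))$ for every linear form follows from uniformity only when $H^1(\EE_L(t+1))=0$ for every line, i.e.\ only for $t \ge k-2$, because the splitting type contains a summand $\OO_L(-k)$. For $-1 \le t \le k-3$ the map is in fact never surjective: a Steiner bundle has $H^2(\EE(t))=0$ for $t\ge -2$, so the long exact sequence of $0 \to \EE(t) \to \EE(t+1) \to \EE_L(t+1) \to 0$ identifies the cokernel with $H^1(\EE_L(t+1)) \neq 0$. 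Hence the proposed $k$-step chain of the ``drop by at least $2$'' inequality from $t=k-2$ down to $t=-1$ is not available.

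Moreover, even if that chain did hold, it would only give $c = h^1(\EE(-1)) \ge 2k + h^1(\EE(k-1))$, an inequality relating $c$ and $k$ in which $r$ never appears, so it cannot produce $r \ge (c+2)/k$. The paper's argument uses a single application of the rank-$\ge 2$ kernel condition, precisely at $t=k-2$, and the rank enters by computing both cohomology groups from the defining sequence $0 \to \EE \to \OO_{\PP^2}^{r+c} \to \OO_{\PP^2}(1)^c \to 0$: one finds $h^1(\EE(k-2)) - h^1(\EE(k-1)) = -c(k+1) + (r+c)k + h^0(\EE(k-2)) - h^0(\EE(k-1))$, and combining the bound $\ge 2$ with $h^0(\EE(k-2)) \le h^0(\EE(k-1))$ gives $rk \ge c+2$. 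This is the step your plan gestures at (``computing $h^1$ at the boundary twist'') but never carries out; instead your anchor $h^1(\EE(-1))=c$ leads nowhere. Incidentally, your aside that $h^1(\EE(t))$ grows for $t \ll 0$ is also incorrect: the defining sequence forces $h^1(\EE(t))=0$ for all $t \le -2$.
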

\begin{proof}
As a direct generalization of the case $k=1$, we have that $\EE$ is a $k$-type uniform bundle if and only if the cohomology map
$$
H^1(\EE(k-2)) \stackrel{f_L}{\longrightarrow} H^1(\EE(k-1)) \longrightarrow H^1(\EE_L(k-1)) =0
$$
is surjective for any choice of a line $L \subset \PP^2$. This means that we are looking for the conditions to ensure the existence a surjective map of vector bundles
\begin{equation}\label{steinDualk}
H^1(\EE(k-2)) \otimes \OO_{(\PP^2)^*}(-1) \longrightarrow H^1(\EE(k-1)) \otimes \OO_{(\PP^2)^*}\longrightarrow 0
\end{equation}
defined on the dual projective space $(\PP^2)^*$. As before, see \cite{DK}, this occurs if and only if $h^1(\EE(k-2)) - h^1(\EE(k-1)) \geq 2$. Using the defining short exact sequence of $\EE$ to compute the cohomology groups, observing that we have  that
$$
-c(k+1) + (r+c)k \geq 2 + h^0(\EE(k-1)) - h^0(\EE(k-2)),
$$
and, observing that $h^0(\EE(k-2)) \leq h^0(\EE(k-1))$, we obtain the required inequality, i.e.
$r \geq \frac{c+2}{k}.$
\end{proof}
\begin{Remark}
Unfortunately, the previous bound is not sharp. Let us consider for example the case $c=4$ and $k=2$. Indeed, from the previous bound, it would be possible to obtain a 2-type uniform vector bundle of rank 3. Nevertheless, it is known that all uniform rank 3 vector bundles on $\PP^2$ are of the form (see \cite{E})
$$
\OO_{\PP^2}(a) \oplus \OO_{\PP^2}(b) \oplus \OO_{\PP^2}(c) \:\:\mbox{or}\:\: \Omega_{\PP^2}(a) \oplus \OO_{\PP^2}(b) \:\:\mbox{or}\:\: S^2\Omega_{\PP^2}(a),
$$
each one giving, under our hypothesis, a contradiction.
\end{Remark}
Let us conclude this part focusing on a higher bound for the rank of $k$-type uniform bundles on  $\PP^2$.
\begin{Theorem}
Let $\EE$ be a rank $r$, $k$-type uniform Steiner bundle on $\PP^2$ with $c_1(\EE)=-c$. Then $$r \leq 2c - k^2 +k.$$
\end{Theorem}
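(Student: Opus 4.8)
The plan is to reformulate the bound as a lower bound on a single cohomology group, and then obtain it by iterating the dual--plane construction already used for Theorems \ref{mainthm1} and \ref{mainthm2}. As in those theorems I assume $\EE$ has no trivial direct summand (a trivial summand would raise $r$ while fixing $c$), so $H^0(\EE)=0$; write the splitting type as $(-k^{a_k},\dots,-1^{a_1},0^{a_0})$ with $a_k>0$, so $r=\sum_{i=0}^k a_i$ and $c=\sum_{i=1}^k i\,a_i$. From the defining sequence one has $H^2(\EE(t))=0$ for $t\ge-2$, $H^0(\EE(t))=0$ for $t\le0$, $h^1(\EE(-1))=c$, and $\chi(\EE)=r-2c$, whence $h^1(\EE)=2c-r$. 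Thus $r\le 2c-k^2+k$ is \emph{equivalent} to
\[
h^1(\EE)\ \ge\ k^2-k=2\binom{k}{2}.
\]

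Next I would record, twist by twist, what $k$-type uniformity gives. On a general line $L$ one has $\EE_L\cong\bigoplus_{i=0}^k\OO_L(-i)^{a_i}$, so $h^1(\EE_L(t))=\sum_{i\ge t+2}(i-t-1)\,a_i$, vanishing exactly for $t\ge k-1$. Restricting $0\to\EE(t-1)\to\EE(t)\to\EE_L(t)\to 0$ and using $H^2(\EE(t-1))=0$ for $t\ge0$, the map $f_t\colon H^1(\EE(t-1))\to H^1(\EE(t))$ has cokernel $H^1(\EE_L(t))$ for \emph{every} $L$; set $\alpha_t=\dim\ker f_t\ge0$. For $t\ge k-1$ the $f_t$ are surjective, so, letting $L$ vary over $(\PP^2)^*$, they globalise to surjections of bundles $H^1(\EE(t-1))\otimes\OO_{(\PP^2)^*}(-1)\twoheadrightarrow H^1(\EE(t))\otimes\OO_{(\PP^2)^*}$ whose twisted kernels are Steiner bundles on $(\PP^2)^*$, exactly as in the proof of Theorem \ref{mainthm1}; hence $\alpha_t\ge 2$ whenever $h^1(\EE(t))>0$ and $t\ge k-1$. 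Telescoping the relation $h^1(\EE(t-1))=h^1(\EE(t))+\alpha_t-h^1(\EE_L(t))$ over $t\ge1$, and using $\sum_{t\ge1}h^1(\EE_L(t))=\sum_{i\ge2}\binom{i-1}{2}a_i$, one obtains
\[
h^1(\EE)=\sum_{t\ge1}\alpha_t-\sum_{i\ge2}\binom{i-1}{2}a_i .
\]

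So the whole theorem reduces to the estimate $\sum_{t\ge1}\alpha_t\ge 2\binom{k}{2}+\sum_{i\ge2}\binom{i-1}{2}a_i$. Here the hypothesis $a_k>0$ is decisive: the summand $\OO_L(-k)$ of $\EE_L$ forces $h^1(\EE_L(t))\ne0$ for \emph{every} $t=0,\dots,k-2$, and this single summand contributes $\sum_{t=0}^{k-2}(k-1-t)=\binom{k}{2}$ to $\sum_{t\ge0}h^1(\EE_L(t))$. To produce the required $\alpha_t$'s I would run the Steiner-bundle argument on $(\PP^2)^*$ not just at twist $k-1$ but successively at twists $k-1,k-2,\dots,0$; for $t<k-1$ the cokernel of the globalised map is no longer zero but --- by uniformity --- a vector bundle on $(\PP^2)^*$ of rank $h^1(\EE_L(t))$, so the kernel is not quite Steiner, but one still expects a rank lower bound on $\alpha_t$ in which the contribution of the persistent class $\OO_L(-k)$ is counted with multiplicity $2$. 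I expect the genuine obstacle to be precisely this bookkeeping: one must check that the $h^1(\EE_L(t))$'s ``lost'' at the low twists $0\le t\le k-2$ are outweighed by the accumulated $2$'s, so that the constant comes out to $2\binom{k}{2}=k^2-k$ rather than the elementary value $\sum_{i\ge2}(i-1)a_i\ge k-1$ obtained from $H^2(\EE(-1))=0$ alone. A possibly cleaner variant of the same idea is an induction on $k$ (base case $k=1$ being $r\le 2c$, Theorem \ref{mainthm1}) in which $\EE$ is replaced by the Steiner bundle it produces at twist $k-1$ on $(\PP^2)^*$, provided one verifies that this bundle is again uniform of type at most $k-1$.
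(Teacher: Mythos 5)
Your reduction is correct as far as it goes: assuming $H^0(\EE)=0$ (no trivial summands) one has $h^1(\EE)=2c-r$, so the claimed bound is indeed equivalent to $h^1(\EE)\ge k^2-k$, and the telescoped identity $h^1(\EE)=\sum_{t\ge 1}\alpha_t-\sum_{i\ge 2}\binom{i-1}{2}a_i$ follows correctly from the restriction sequences together with $H^2(\EE(t))=0$ for $t\ge -2$. But the argument stops exactly where the real work begins. The only tool you actually have is the Dolgachev--Kapranov rank bound, which yields $\alpha_t\ge 2$ only for those $t\ge k-1$ with $h^1(\EE(t))>0$; for $t\le k-2$ the globalized map $H^1(\EE(t-1))\otimes\OO_{(\PP^2)^*}(-1)\to H^1(\EE(t))\otimes\OO_{(\PP^2)^*}$ is no longer surjective, its kernel is not a Steiner bundle, and no lower bound on $\alpha_t$ is established --- you say so yourself (``one still expects a rank lower bound \dots\ I expect the genuine obstacle to be precisely this bookkeeping''). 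Since the target inequality $\sum_{t\ge1}\alpha_t\ge k^2-k+\sum_{i\ge2}\binom{i-1}{2}a_i$ draws precisely on those low twists (the persistent class coming from the $\OO_L(-k)$ summand must be shown to contribute with multiplicity $2$ at each of $t=0,\dots,k-2$, or an equivalent accounting must be done), what you have is a plan with the decisive estimate missing, not a proof. The proposed induction on $k$ at the end has the same status: it is not verified that the auxiliary bundle on $(\PP^2)^*$ is again uniform of type at most $k-1$ with the right invariants.

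For comparison, the paper's proof is entirely different and non-cohomological: it first invokes the fact that a uniform bundle whose splitting type has a gap splits (\cite[Corollary 3, p.~206]{OSS}), so with $a_k>0$ the rank-maximizing splitting type is $(-k,-k+1,\dots,-2,-1^{\alpha_1},0^{\alpha_0})$ with $\alpha_1=c-\binom{k+1}{2}+1$, and then bounds $\alpha_0\le\alpha_1+k-1$ by a direct count of the columns of the defining matrix that depend on a single independent linear form against those in which the remaining variable of a chosen line appears. If you want to salvage your route, the missing piece is a lemma controlling $\ker f_t$ for $0\le t\le k-2$ in the presence of a nonzero cokernel bundle of rank $h^1(\EE_L(t))$ on $(\PP^2)^*$; as it stands that lemma is neither formulated nor proved, so the gap is genuine.
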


\begin{proof}
First of all, let us denote the splitting of $\EE$ in each line by 
$$(-k^{\alpha_k},-k+1^{\alpha_{k-1}},\dots,-1^{\alpha_1},0^{\alpha_0})$$
and notice that being $\EE$ uniform with $\alpha_k>0$, we cannot have gaps in the splitting type of the bundle. (see \cite[Corollary 3, pg 206]{OSS}). Hence, the uniform vector bundle with the maximal rank will be given by a splitting type of the following form
$$
(-k,-k+1,\dots,-2,-1^{\alpha_1},0^{\alpha_0}).
$$
In order to obtain the required Chern class for $\EE$, we have that

$$
\alpha_1 = c - \frac{k(k+1)}{2} +1
$$
and also
$$
r = c - \frac{k(k+1)}{2} +1 + \alpha_0 + k -1.
$$

The integer $\alpha_0$ describes the number of columns of the defining matrix of $\EE$ which depend, after a possible action of the involved groups in the matrix, on a unique linearly independent linear form; let us suppose it to be $t$. Restrict to a line different from the one defined by the vanishing of the previous linear form, suppose it to be $L:=\{y=0\}$ for example.
Observe that the columns, in the defining matrix of $\EE$, where appears the variable $y$ but not the variable $x$, are exactly $\alpha_1 + k -1$ because, restricting at $t=0$, we have a $y$ for each non trivial summand of the bundle restricted on the line. If we now restrict at the line $y=0$, then we must have $\alpha_1 -k-1$ columns that depend only by $t$ and this implies that
$$
\alpha_0\leq \alpha_1 + k -1.
$$
Putting all together, we get $r \leq 2c - k^2 +k$, as required.
\end{proof}
\begin{Remark}
The obtained bound is also not sharp. Indeed, our belief is that we should have $r \leq 2c-k^2+1$, with the equality achieved if and only if $\EE$ is of the form $S^k(\Omega_{\PP^2}(1)) \oplus \Omega_{\PP^2}(1)^{c-\binom{k+1}{2}+1}$. This is actually one of the reason that pushed us to state the conjecture that closes this work.
\end{Remark}

\section{Final remarks and open questions}
In this last section we will discuss some interesting problems that arise from what we have discussed in this work.
A careful reader has already noticed that the family of vector bundles presented in Example \ref{minRankPn} fills all the cases allowed by the rank bounds proven for 1-type uniform bundles. On the other hand, the family of vector bundles introduced in Example \ref{minkRankPn} leaves some ``gaps'' in set of allowed ranks.
This can be justified observing that we are considering families of vector bundles which are all Steiner. In any case, our strong belief is that any uniform vector bundle which is also Steiner can be obtained through our construction. Such statement leads to the following conjecture
\begin{Conjecture}
Any $k$-type uniform vector bundle $\EE$ on $\PP^n$, for any $k$ positive integer, can be obtained as the kernel of the following short exact sequence
$$
0 \longrightarrow \EE \longrightarrow \bigoplus_{j=0}^k S^j\left(\Omega_{\PP^2}(1)\right)^{\alpha_j} \longrightarrow \OO_{\PP^2}^\beta \longrightarrow 0,
$$
with $\alpha_j \geq 0$, for $j=1,\ldots,k-1$, $\alpha_k>0$ and $\beta\geq 0$. Moreover, all such bundles are obtained using the process described in the Examples \ref{minRankPn} and \ref{minkRankPn}.
\end{Conjecture}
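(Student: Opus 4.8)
\medskip

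The plan is to promote the geometric description of Section \ref{sec-1type} — in which a $1$-type uniform Steiner bundle on $\PP^n$ was recognised as a kernel $\EE(H)=\ker(f_H^1)^*$ attached to a subspace $H\subset V^{\oplus c}$ meeting every $D^{\oplus c}$ trivially — from the single symmetric power $\Omega_{\PP^n}(1)=S^1(\Omega_{\PP^n}(1))$ to the whole tower $S^j(\Omega_{\PP^n}(1))$, $0\le j\le k$. Given a $k$-type uniform Steiner bundle $\EE$ with splitting type $(-k^{a_k},\dots,-1^{a_1},0^{a_0})$, one must first establish that uniformity (with $a_k>0$) forces the intermediate multiplicities into a non-increasing sequence $a_1\ge a_2\ge\cdots\ge a_k$ with no gaps (the absence of gaps being \cite[Corollary 3, pg 206]{OSS}); this allows a choice of $\alpha_0,\dots,\alpha_k\ge0$, with $\alpha_j=a_j-a_{j+1}$ for $1\le j\le k$, and of $\beta\ge0$, so that the Steiner bundle $\GG=\bigoplus_{j=0}^k S^j(\Omega_{\PP^n}(1))^{\alpha_j}$ together with $\OO_{\PP^n}^{\beta}$ matches the splitting profile of $\EE$. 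It then remains to prove: (i) that $\EE$ embeds in $\GG$ with trivial cokernel $\OO_{\PP^n}^{\beta}$; and (ii) that every kernel of a surjection $\GG\to\OO_{\PP^n}^{\beta}$ which happens to be $k$-type uniform is produced, up to the $PGL(n+1)$-action on defining matrices, by the algorithm of Examples \ref{minRankPn} and \ref{minkRankPn}.

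For (i) I would dualise and instead exhibit $\EE^\lor$ as a quotient of $\GG^\lor=\bigoplus_{j=0}^k S^j(T_{\PP^n}(-1))^{\alpha_j}$. As in the proof of Theorem \ref{mainthm2}, $k$-type uniformity of $\EE$ is captured by the surjectivity, for each $j$ with $1\le j\le k$, of the Koszul-induced maps $H^1(\EE(j-2)^{n-1})\to H^1(\EE(j-1))$ on $\PP^n$ (together with the vanishing $H^1(\EE_L(k-1))=0$), equivalently by the surjectivity of a family of bundle maps on $G(1,n)$ built from $\QQ$; since $\EE$ is Steiner, $H^2_*(\EE)=\cdots=H^{n-1}_*(\EE)=0$, so all the numbers $h^1(\EE(t))$ are read off the defining sequence exactly as in Lemma \ref{Claim1} and Proposition \ref{prop-resE}. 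The point is that $\GG^\lor$ is, level by level, the universal recipient of such surjections — each $S^j(T_{\PP^n}(-1))$ accounting for a unit increment of multiplicity at level $j$, and direct sums realising arbitrary admissible profiles — so that these surjectivity statements should patch into the sought quotient map $\GG^\lor\twoheadrightarrow\EE^\lor$. Equivalently one analyses $\Hom(\EE,\GG)$, shows that the locus of homomorphisms that fail to be fibrewise monomorphisms is a proper closed subset, and checks by a Chern-class and splitting-type bookkeeping that the cokernel of a general homomorphism is forced to be $\OO_{\PP^n}^{\beta}$.

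For (ii), once the $S^j(\Omega_{\PP^n}(1))$ are put in the block presentation $B_j$ of Example \ref{minkRankPn}, the kernel of a surjection $\GG\to\OO_{\PP^n}^{\beta}$ is presented by a matrix obtained from the block-diagonal matrix $\bigoplus_j B_j^{\,\alpha_j}$ by column operations followed by the deletion of $\beta$ columns, which is exactly the shape of the algorithm. What remains is a transversality criterion generalising \cite[Lemme 1]{D}: such a kernel is $k$-type uniform precisely when, for every line $L$, the level-by-level analogue of the condition ``$D^{\oplus c}\cap H=\{0\}$'' holds, the bad configurations forming a $PGL(n+1)$-invariant closed subvariety whose dimension is controlled as in the case $p=1$ (where the union of the $S^pD$ has dimension $2n+p-1$). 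One then checks that the explicit matrices of Examples \ref{minRankPn} and \ref{minkRankPn} lie outside this locus and that the column-addition and column-deletion paths discussed in Section \ref{sec-1type} connect them to an arbitrary $k$-type uniform Steiner bundle with the prescribed invariants.

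The main obstacle is (i). For $k=1$ the single surjectivity condition of Theorem \ref{thmGeom} already pinned down the geometric model, whereas for $k\ge 2$ one must reconcile the $k$ conditions at levels $j=1,\dots,k$ simultaneously, and it is not a priori clear that together they force the sub-object structure $\EE\hookrightarrow\GG$ rather than merely constraining numerical invariants; in particular one has to rule out — or else absorb, after column operations, into the construction — ``exotic'' $k$-type uniform Steiner bundles whose defining matrix genuinely entangles several levels. A secondary difficulty, foreshadowed by the non-sharpness of the $k$-type rank bounds proved above, is the combinatorial control of which profiles $(\alpha_0,\dots,\alpha_k)$ and which values of $\beta$ are compatible with uniformity.
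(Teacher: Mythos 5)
The statement you are addressing is not proved in the paper at all: it is stated as an open \emph{Conjecture}, offered as the natural endpoint of the constructions of Examples \ref{minRankPn} and \ref{minkRankPn} and motivated by the non-sharpness of the $k$-type rank bounds. So there is no proof in the paper to compare with, and your text does not supply one either; it is a programme whose two central steps are exactly the content of the conjecture and are left unestablished. In step (i), the assertion that $\GG^\lor=\bigoplus_j S^j(T_{\PP^n}(-1))^{\alpha_j}$ is ``the universal recipient'' of the level-by-level surjectivity conditions is not an argument: surjectivity of the Koszul-induced maps $H^1(\EE(j-2)^{n-1})\to H^1(\EE(j-1))$ for each $j$ constrains cohomological numerics, but no mechanism is given that assembles these into an actual monomorphism of sheaves $\EE\hookrightarrow\GG$ with cokernel $\OO_{\PP^n}^{\beta}$ (or dually a surjection $\GG^\lor\twoheadrightarrow\EE^\lor$ with trivial kernel); the alternative route via a ``general'' element of $\Hom(\EE,\GG)$ presupposes that $\Hom(\EE,\GG)$ is large enough and that a general map is fibrewise injective, neither of which is verified. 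Even the preliminary bookkeeping is unjustified: the ``no gaps'' result you cite from \cite{OSS} does not give the monotonicity $a_1\ge a_2\ge\cdots\ge a_k$ that you need in order to define $\alpha_j=a_j-a_{j+1}\ge 0$, and for $k\ge 2$ the splitting type of the kernel of a surjection $\GG\to\OO_{\PP^n}^{\beta}$ on a given line is not simply the profile of $\GG_L$ with $\beta$ trivial summands removed unless the restricted surjection is well behaved on every line, which is again the uniformity question itself.

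Step (ii) has the same character: the ``transversality criterion generalising \cite[Lemme 1]{D}'' to direct sums of mixed symmetric powers is named but not formulated, let alone proved, and the dimension count of the bad locus is only said to be ``controlled as in the case $p=1$''. Note also that in the $1$-type case the paper's Theorem \ref{thmGeom} only shows that any $\EE$ surjects onto $\OO_{\PP^n}^{\alpha}$ with kernel a \emph{minimal-rank} $1$-type uniform bundle; it does not show that every such minimal-rank bundle arises from the explicit matrices of Example \ref{minRankPn}, so even the $k=1$ instance of the second half of the conjecture (``all such bundles are obtained using the process described'') is not settled by the arguments you are extrapolating from. Your own closing paragraph correctly identifies obstacle (i) as unresolved; as it stands, the proposal is a plausible strategy consistent with the authors' evidence, not a proof, and the statement should be treated as the open problem the paper declares it to be.
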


\end{document}